\pgfplotsset{compat=1.7}
\newtheorem{definition}{Definition}[section]
\newtheorem{theorem}{Theorem}[section]
\newtheorem{example}{Example}[section]
\newtheorem{conjecture}{Conjecture}[section]
\newtheorem{proposition}{Proposition}[section]
\begin{document}

\title{Some further applications of a lattice theoretic method in the study of singular LCM matrices}

\author{{\sc Mika Mattila and Pentti Haukkanen}
\\Faculty of Information Technology and Communication Sciences,\\Tampere University, Finland
\\E-mail:  mika.mattila@tuni.fi, pentti.haukkanen@tuni.fi\\ \\
{\sc Jori Mäntysalo}
\\IT services,\\Tampere University, Finland
\\E-mail: jori.mantysalo@tuni.fi}
\maketitle

\begin{abstract}
In 1876 H. J. S. Smith defined an LCM matrix as follows: let $S=\{x_1,x_2,\ldots,x_n\}$ be a set of positive integers with $x_1<x_2<\cdots<x_n$. The LCM matrix $[S]$ on the set $S$ is the $n\times n$ matrix with $\mathrm{lcm}(x_i,x_j)$ as its $ij$ entry.
During the last 30 years singularity of LCM matrices has interested many authors. In 1992 Bourque and Ligh ended up conjecturing that if the GCD closedness of the set $S$ (which means that $\gcd(x_i,x_j)\in S$ for all $i,j\in\{1,2,\ldots,n\}$), suffices to guarantee the invertibility of the matrix $[S]$. However, a few years later this conjecture was proven false first by Haukkanen et al. and then by Hong. It turned out that the conjecture holds only on GCD closed sets with at most 7 elements but not in general for larger sets. However, the given counterexamples did not give much insight on why does the conjecture fail exactly in the case when $n=8$. This situation was later improved in a couple of articles, where a new lattice theoretic approach was introduced (the method is based on the fact that because the set $S$ is assumed to be GCD closed, the structure $(S,|)$ actually forms a meet semilattice). For example, it has been shown that in the case when the set $S$ has $8$ elements and the matrix $[S]$ is singular, there is only one option for the semilattice structure of $(S,|)$, namely the cube structure.

Since the cases $n\leq 8$ have been thoroughly studied in various articles, the next natural step is to apply the methods to the case $n=9$. This was done by Alt{\i}n{\i}\c{s}{\i}k and Alt{\i}nta\c{s} as they consider the different lattice structures of $(S,|)$ with nine elements that can result in a singular LCM matrix $[S]$. However, their investigation leaves two open questions, and the main purpose of this presentation is to provide solutions to them. We shall also give a new lattice theoretic proof for a result referred to as Sun's conjecture, which was originally proven by Hong via number theoretic approach.
\end{abstract}

\section{Introduction}

The concept of an LCM matrix, as well as the concept of a GCD matrix, was originally defined by H. J. S. Smith \cite{Smi}
in his seminal paper from the year 1876. Assuming the set $S =\{x_1, x_2,\ldots, x_n\}$ to be a finite subset of $\mathbb{Z}^+$ with distinct elements, Smith defined the GCD matrix $(S)$ of the set $S$ to be the $n\times n$ matrix with $\gcd(x_i, x_j)$ as its $ij$ element. Similarly, the LCM matrix $[S]$ of the set $S$ was defined to be the $n\times n$ matrix with $\mathrm{lcm}(x_i, x_j)$ as its $ij$ element. Besides calculating determinant formulas for several GCD and LCM type matrices, Smith was also considered the invertibility of GCD and LCM matrices. For example, he showed that if the set $S$ is factor closed (i.e. the implication ($y\,|\,x$ for some $x\in S)\Rightarrow y\in S$ holds for all $y\in\mathbb{Z}^+$), then both of the matrices $(S)$ and $[S]$ are invertible.

Surprisingly it took more than 110 years until LCM matrices got as get much attention in mathematical literature. Although GCD-type matrices had been studied in several papers over the years, it was not until 1992 that LCM matrices were reintroduced by Bourque and Ligh \cite{Bour92}. Among other things, they showed that it is actually quite easy to find singular LCM matrices by considering the LCM matrix of the four element set $S = \{1, 2, 15, 42\}$ (see \cite[p. 68]{Bour92}). The authors then turned the attention to finding conditions less restricting than factor closedness of the set $S$ that would suffice to quarantee the invertibility of the matrix $[S]$. They ended up conjecturing that the GCD-closedness of the set $S$ suffices to guarantee the invertibility of $[S]$.

In 1997 Haukkanen et al. \cite{HauWanSil} managed to find a GCD closed set $S$ with $9$ elements whose LCM matrix is singular, which disproved the Bourque-Ligh conjecture and showed that GCD closedness of the set $S$ does not actually suffice to guarantee the invertibility of the matrix $[S]$. Two years later Hong \cite{Hong99} was able to find another similar counterexample in which there was only $8$ elements in the set $S$. By using number theoretic methods Hong \cite{Hong06} also showed that the conjecture holds for GCD closed sets with at most 7 elements and that it does not hold in general for larger sets, which in some sense meant that the conjecture was solved completely. However, if we assume the set $S$ is GCD closed, it also means that the structure $(S,|)$ itself forms a meet semilattice, which makes it possible to study the conjecture from an entirely lattice theoretic point of view. In \cite{KMH18} Korkee et al. showed (via investigation of all possible semilattice structures with at most $7$ elements) that the LCM matrix $[S]$ is invertible for any GCD closed set $S$ with $|S|\leq 7$. In \cite{MHM15} this same lattice theoretic approach is utilized to show that if the matrix $[S]$ is singular and the set $S$ is GCD closed with 8 elements, then $(S,|)$ has unique, cube-like structure. These same methods were also adapted by Alt{\i}n{\i}\c{s}{\i}k et al. in \cite{AA17},
where they study the singularity of the matrix $[S]$ in the case when $S$ is a GCD closed set with $9$ elements. Finally, in 2020 Haukkanen et. al. \cite{MHM20} refined the lattice theoretic method by defining a property that was in common for all meet semilattices with at most $8$ elements that sufficed to guarantee the invertibility of the corresponding LCM matrix $[S]$. At the same time it also turned out that in many cases the lattice theoretic structure of $(S,|)$ not only guarantees the invertibility but also completely determines the inertia of the matrix $[S]$.

In the present article we continue utilizing our lattice theoretic method and find a couple of new ways to apply it. In Section 3 we reconsider the work done by Alt{\i}n{\i}\c{s}{\i}k and Alt{\i}nta\c{s} in \cite{AA17} from a slightly different perspective and we classify all possible $9$ element semilattice structures for which there exists a gcd closed set of this type such that the corresponding LCM matrix is singular. In this context we also solve one open conjecture about this 9 element case raised in their paper. In Section 4 we turn our attention to another conjecture raised by Alt{\i}n{\i}\c{s}{\i}k and Alt{\i}nta\c{s} and disprove it by presenting a couple of counterexamples. In Section 5 we give a novel lattice theoretic proof for a result known as Sun's conjecture, and finally in Section 6 we give an example on how our method can give some useful information in the study of the so-called power LCM matrices as well.

\section{Preliminaries}

First we need to introduce a couple of notations and concepts developed in \cite{MHM20} and bring them to our lattice theoretic context. Let $S=\{x_1,x_2,\ldots,x_n\}$ be a GCD closed set with $x_i\preceq x_j\Rightarrow i\leq j$. Let us denote
\[
C_S(x)=\{y\in S\quad \big|\quad y\,|\, x \text{\ and\ for\ all\ }z\in S: (y\,|\,z\text{\ and\ }z\,|\, x\Rightarrow y=z)\}
\] 
and
\[
\mathrm{meetcl}_S(C_S(x))=\{\gcd(y_1,y_2,\ldots,y_k)\ \big|\ k\in\mathbb{Z}^+\text{\ and\ }y_1,y_2,\ldots,y_k\in C_S(x)\}.
\]
In other words, $C_S(x)$ is the set of all elements of the set $S$ that are covered by $x$ in the meet semilattice $(S,|)$ and $\mathrm{meetcl}_S(C_S(x))$ is the smallest GCD closed subset of $S$ that contains all the elements of the set $C_S(x)$.
	
\begin{definition}[cf. \cite{MHM20}, Definition 2.2]
An element $x\in S$ generates a double-chain set in $S$ if the set $\mathrm{meetcl}(C_S(x))\setminus C_S(x)$ can be expressed as a union of two disjoint sets $A$ and $B$ that are chains in $(S,|)$.
\end{definition}

By the simple fact that $\gcd(x_i,x_j)\text{lcm}(x_i,x_j)=x_ix_j$ we may decompose the matrix $[S]$ as
\[
[S]=\text{diag}(x_1,x_2,\ldots,x_n)\left(\frac{1}{\gcd(x_i,x_j)}\right)\text{diag}(x_1,x_2,\ldots,x_n).
\]
Next we use Möbius inversion and define the function $\Psi_{S}$ on $S$ as
\begin{equation}\label{eq:psi}
\Psi_{S}(x_i)=\sum_{x_j\,|\,x_i}\frac{\mu_S(x_j,x_i)}{x_j},
\end{equation}
where the Möbius function values $\mu_S(x_j,x_i)$ are defined by using the recursive formula
\begin{align*}
&\mu_S(x_i,x_i)=1,\\
&\mu_S(x_j,x_i)=-\sum_{x_j\prec x_k\preceq x_i}\mu_S(x_k,x_i)=-\sum_{x_j\preceq x_k\prec x_i}\mu_S(x_j,x_k).
\end{align*}

Now the matrix $\left(\frac{1}{\gcd(x_i,x_j)}\right)$ may be written as
\[
\left(\frac{1}{\gcd(x_i,x_j)}\right)=E\,\text{diag}(\Psi_{S}(x_1),\Psi_{S}(x_2),\ldots,\Psi_{S}(x_n))\,E^T,
\]
where $E=(e_{ij})$ is the $0,1$ incidence matrix of the set $S$ with
\[
e_{ij}=
\begin{cases}
1 &\text{if }x_j\,|\,x_i,\\
0 &\text{otherwise.}
\end{cases}
\]

Putting all together we have
\begin{equation*}
[S]=\Delta E\Lambda E^T\Delta=(\Delta E)\Lambda(\Delta E)^T,
\end{equation*}
where $\Delta=\text{diag}(x_1,x_2,\ldots,x_n)$ and $\Lambda=\text{diag}(\Psi_{S}(x_1),\Psi_{S}(x_2),\ldots\Psi_{S}(x_n)).$

Since the matrix $\Delta E$ is clearly invertible (triangular matrix with nonzero diagonal elements), the matrix $[S]$ is invertible if and only if the matrix $\Lambda$ is invertible. Moreover, 
\[
\det\Lambda=\Psi_{S}(x_1)\Psi_{S}(x_2)\cdots\Psi_{S}(x_n).
\]
From this we easily obtain the following fundamental result.

\begin{proposition}\label{th:invertibility}
If the set $S$ is GCD closed, then the LCM matrix $[S]$ is invertible if and only if $\Psi_{S}(x_i)\neq0$ for all $i=1,2,\ldots,n.$ 
\end{proposition}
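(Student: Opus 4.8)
The plan is to read the statement off directly from the decomposition $[S]=(\Delta E)\Lambda(\Delta E)^T$ established just above, using only elementary linear algebra. First I would observe that $\Delta E$ is invertible: $\Delta=\mathrm{diag}(x_1,\ldots,x_n)$ has strictly positive diagonal, and $E=(e_{ij})$ is lower triangular with respect to the chosen indexing, since $e_{ij}=1$ forces $x_j\,|\,x_i$, hence $x_j\preceq x_i$, hence $j\le i$; moreover $e_{ii}=1$ because $x_i\,|\,x_i$. Thus $\Delta E$ is lower triangular with diagonal entries $x_1,\ldots,x_n$, so $\det(\Delta E)=x_1x_2\cdots x_n\ne 0$.

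Next I would use invertibility of $\Delta E$ to transfer invertibility between $[S]$ and $\Lambda$. On the one hand, if $\Lambda$ is invertible then $[S]=(\Delta E)\Lambda(\Delta E)^T$ is a product of invertible matrices, hence invertible. Conversely, if $[S]$ is invertible, then $\Lambda=(\Delta E)^{-1}[S]\bigl((\Delta E)^T\bigr)^{-1}$ is invertible. Since $\Lambda=\mathrm{diag}(\Psi_S(x_1),\ldots,\Psi_S(x_n))$ is diagonal, it is invertible precisely when none of its diagonal entries vanishes, i.e. when $\Psi_S(x_i)\ne 0$ for every $i=1,\ldots,n$. Chaining these equivalences yields the claim. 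Equivalently, and just as quickly, one computes $\det[S]=\bigl(\det(\Delta E)\bigr)^2\det\Lambda=(x_1\cdots x_n)^2\,\Psi_S(x_1)\cdots\Psi_S(x_n)$, which is nonzero exactly when each factor $\Psi_S(x_i)$ is nonzero.

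I do not anticipate any real obstacle in the proposition itself: once the factorization $[S]=\Delta E\Lambda E^T\Delta$ is in hand, the argument is purely a remark about determinants of congruent matrices. The one place where the hypothesis of GCD-closedness is genuinely needed is in the preceding identity $E\Lambda E^T=\bigl(1/\gcd(x_i,x_j)\bigr)$, since there one must know that $\gcd(x_i,x_j)\in S$ in order to identify the set of common divisors of $x_i$ and $x_j$ lying in $S$ with the set of divisors in $S$ of the single element $\gcd(x_i,x_j)$, after which the M\"obius-inversion definition \eqref{eq:psi} of $\Psi_S$ makes the corresponding sum telescope to $1/\gcd(x_i,x_j)$. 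As that factorization is already available to us, nothing further is required.
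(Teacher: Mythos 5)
Your proposal is correct and follows essentially the same route as the paper: it reads the proposition off the factorization $[S]=(\Delta E)\Lambda(\Delta E)^T$, noting that $\Delta E$ is triangular with nonzero diagonal (hence invertible) and that the diagonal matrix $\Lambda$ is invertible precisely when every $\Psi_S(x_i)\neq 0$. Your additional remark on where GCD closedness enters (in identifying $E\Lambda E^T$ with the reciprocal GCD matrix) matches the paper's preceding derivation, so nothing further is needed.
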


The following theorem is one of the key results of \cite{MHM20} and it plays a pivotal role in this article as well.

\begin{theorem}[cf. \cite{MHM20}, Theorem 4.1]\label{th:AB-joukko}
Let $S$ be a GCD closed set. If the element $x_i\in S$ generates a double-chain set in $S$, then $\Psi_{S}(x_i)\neq 0$.
\end{theorem}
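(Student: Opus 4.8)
The plan is to first rewrite $\Psi_S(x_i)$ so that it depends only on the covering set $C:=C_S(x_i)$ and the gcds of its subsets --- that is, only on $\mathrm{meetcl}_S(C)$ --- and then to exploit the double-chain hypothesis on that data. Since $\Psi_S$ is by definition the M\"obius transform of $x\mapsto 1/x$ on the meet semilattice $(S,|)$, M\"obius inversion gives $1/x_j=\sum_{x_k\preceq x_j}\Psi_S(x_k)$ for every $j$. Every element strictly below $x_i$ lies below some cover of $x_i$, so $\{x_k:x_k\prec x_i\}=\bigcup_{c\in C}\{x_k:x_k\preceq c\}$; feeding this, together with $\bigcap_{c\in I}\{x_k:x_k\preceq c\}=\{x_k:x_k\preceq\gcd(I)\}$ (and $\gcd(I)\in\mathrm{meetcl}_S(C)\subseteq S$), into the identity $1/x_i=\Psi_S(x_i)+\sum_{x_k\prec x_i}\Psi_S(x_k)$ via inclusion--exclusion, one obtains the closed form
\[
\Psi_S(x_i)=\frac{1}{x_i}+\sum_{\emptyset\neq I\subseteq C}\frac{(-1)^{|I|}}{\gcd(I)}.
\]
Collecting terms with a common value of $\gcd(I)$ (Rota's crosscut theorem) rewrites this as $\Psi_S(x_i)=\sum_{z\in\widehat{M}}\mu_{\widehat{M}}(z,x_i)/z$, where $\widehat{M}:=\mathrm{meetcl}_S(C)\cup\{x_i\}$ is a finite lattice with top $x_i$, coatom set $C$, and $\widehat{M}\setminus(C\cup\{x_i\})=A\cup B$ a disjoint union of two chains. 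Since every element of $\widehat{M}$ divides $x_i$, the whole statement is now about this one small lattice.

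The case $x_i\neq\mathrm{lcm}(C)$ is then immediate and needs nothing about $A,B$: choosing a prime $p$ dividing the positive integer $x_i/\mathrm{lcm}(C)$ gives $v_p(\gcd(I))\le v_p(\mathrm{lcm}(C))<v_p(x_i)$ for every $\emptyset\neq I\subseteq C$, where $v_p$ is the $p$-adic valuation, so in the closed form the term $1/x_i$ has $p$-adic valuation exactly $-v_p(x_i)$ while every other term has valuation $\ge-v_p(x_i)+1$; hence $v_p(\Psi_S(x_i))=-v_p(x_i)$, and in particular $\Psi_S(x_i)\neq0$. This reduces the theorem to the case $x_i=\mathrm{lcm}(C)$, in which necessarily $|C|\ge2$.

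For $x_i=\mathrm{lcm}(C)$ I would induct on $|\widehat{M}|$, and here the two-chain hypothesis is essential. Since $x_i=\mathrm{lcm}(C)$ but no coatom equals $x_i$, there is a prime $p$ with $C=C_{=}\sqcup C_{<}$, where $C_{=}=\{c:v_p(c)=v_p(x_i)\}$ and $C_{<}=\{c:v_p(c)<v_p(x_i)\}$ are both nonempty. Every closed-form term indexed by an $I$ meeting $C_{<}$ becomes divisible by $p$ once the denominator $x_i$ is cleared, so $x_i\Psi_S(x_i)\equiv x_i\Psi_{\widehat{M}'}(x_i)\pmod p$ with $\widehat{M}'=\mathrm{meetcl}_S(C_{=})\cup\{x_i\}$; the point of the hypothesis is that, because $A\cup B$ has no three-element antichain, $\widehat{M}'$ --- and every sublattice met further down in the recursion --- is again of double-chain type. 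The cube $B_3$, whose bottom part is a three-element antichain together with a minimum, is exactly the configuration this rules out, and for it $\Psi$ genuinely can vanish. Applying the induction to $\widehat{M}'$ (after dividing its elements by $p^{v_p(x_i)}$) shows that $x_i\Psi_{\widehat{M}'}(x_i)$ is a nonzero integer, and what remains is to show it is not divisible by $p$ --- equivalently, that the alternating sum of the minimal-$v_p$ terms of $\Psi_{\widehat{M}'}$ does not cancel modulo $p$. I expect this cancellation control to be the main obstacle: knowing $\Psi_{\widehat{M}'}(x_i)\neq0$ is not enough, since a double-chain structure can have $\Psi$ divisible by an unrelated prime (for instance $\Psi_{\{1,6,7,42\}}(42)=5/7$), so $p$ must be chosen carefully and the residue pinned down using precisely how the coatoms of $\widehat{M}'$ sit above the two chains $A$ and $B$. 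Morally, one has to show that the width-two structure of the bottom of $\widehat{M}$ is rigid enough that no such alternating sum of reciprocals can collapse to zero, whereas the cube demonstrates that a width-three antichain permits exactly this collapse.
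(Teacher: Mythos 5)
Your reduction steps are fine: the crosscut identity $\Psi_S(x_i)=\frac{1}{x_i}+\sum_{\emptyset\neq I\subseteq C}(-1)^{|I|}/\gcd(I)$ with $C=C_S(x_i)$ is correct and legitimately confines everything to $\mathrm{meetcl}_S(C)\cup\{x_i\}$, and your valuation argument for the case $x_i\neq\mathrm{lcm}(C)$ is a correct $p$-adic rephrasing of Theorem 2.3 of this paper (proved there by the same ``one term fails to be an integer'' idea). Note also that this paper does not prove the statement at all: it is imported from \cite{MHM20}, where the case you are missing is handled by an order-theoretic grouping of the reciprocals along the two chains that forces a strict sign on $\Psi_S(x_i)$, in the same spirit as the sign computations carried out here for $9_K$, $9_L$, $9_M$ and for Sun's conjecture --- not by congruences.

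The genuine gap is that the only case in which the double-chain hypothesis has to work, namely $x_i=\mathrm{lcm}(C)$, is not proved. Your mod-$p$ scheme is a plan whose decisive step --- showing that $1+\sum_{\emptyset\neq I\subseteq C_=}(-1)^{|I|}\,x_i/\gcd(I)\not\equiv 0 \pmod p$ for a suitable prime $p$ --- is exactly what you leave open, and nothing in the write-up uses the two-chain structure beyond the (correct but easy) remark that it is inherited by $\mathrm{meetcl}_S(C_=)$; the comment that the cube has a width-three bottom explains why some hypothesis is needed, but it is a restatement of the phenomenon, not an argument. Your own example shows the finish is not routine: for $\{1,6,7,42\}$ one has $x_i\Psi_S(x_i)=30$, and among the primes of $x_i$ only $p=7$ gives a nonzero residue while $p=2,3$ give $0$, so a ``good'' prime must be produced, and the inductive hypothesis ($\Psi_{\widehat{M}'}(x_i)\neq 0$, or information modulo a different prime chosen at the next stage) says nothing about the residue you need; the recursion as described changes the prime at every level, so it is not even clear how the induction closes. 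In effect you are aiming at the strictly stronger statement that $x_i\Psi_S(x_i)$ is a unit modulo some prime $p$ for which both $C_=$ and $C_<$ are nonempty, with no argument that such a prime exists; until that cancellation-control step is supplied, the theorem is not proved, and I would recommend replacing it by (or comparing with) the sign-grouping argument of \cite{MHM20}.
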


The following result is important to keep in mind when one tries to construct a set $S$ whose LCM matrix $[S]$ is singular.

\begin{theorem}
Suppose that the element $x_i\in S$ and that there is at least one element in $C_S(x_i)$. If $x_i>\mathrm{lcm}(C_S(x_i))$, then $\Psi_{S}(x_i)\neq 0.$
\end{theorem}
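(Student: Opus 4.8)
The plan is to work not with $\Psi_S(x_i)$ directly but with the integer $x_i\Psi_S(x_i)$, and to detect its nonvanishing by reducing modulo the integer $q:=x_i/\mathrm{lcm}(C_S(x_i))$.

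First I would record two elementary facts about $m:=\mathrm{lcm}(C_S(x_i))$, which is well defined because $C_S(x_i)\neq\emptyset$ by hypothesis. Since every $y\in C_S(x_i)$ divides $x_i$, we get $m\mid x_i$; together with the assumption $x_i>m$ this shows that $q=x_i/m$ is an integer with $q\geq 2$. Next, from \eqref{eq:psi} we have
\[
x_i\Psi_S(x_i)=\sum_{x_j\,\mid\,x_i}\mu_S(x_j,x_i)\,\frac{x_i}{x_j},
\]
and because each $x_j$ occurring in the sum divides $x_i$ and the M\"obius values are integers, the right-hand side is an integer. The diagonal term $x_j=x_i$ contributes $\mu_S(x_i,x_i)\cdot 1=1$.

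The crux is to prove the purely order-theoretic fact that every $x_j\in S$ with $x_j\prec x_i$ satisfies $x_j\mid m$, which will make every off-diagonal term of $x_i\Psi_S(x_i)$ divisible by $q$ (since then $\tfrac{x_i}{x_j}=\tfrac{x_i}{m}\cdot\tfrac{m}{x_j}$ is a multiple of $q$). To see this, pick a chain from $x_j$ to $x_i$ that is maximal inside the interval $[x_j,x_i]\cap S$; I claim its penultimate element $z$ is a lower cover of $x_i$ in $S$, i.e. $z\in C_S(x_i)$. Indeed, any $w\in S$ with $z\prec w\prec x_i$ would also satisfy $x_j\mid w$, hence lie in $[x_j,x_i]\cap S$, contradicting maximality of the chain. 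Thus $x_j\preceq z\preceq m$, so $x_j\mid m$. I expect this maximal-chain step to be the main (though modest) obstacle: one has to be careful that a chain maximal \emph{in the interval} genuinely yields a cover of $x_i$ \emph{in the whole set} $S$.

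Finally I would assemble the pieces: all off-diagonal terms of $x_i\Psi_S(x_i)$ are divisible by $q$, so $x_i\Psi_S(x_i)\equiv 1\pmod q$. Since $q\geq 2$, this forces $x_i\Psi_S(x_i)\neq 0$, and as $x_i\neq 0$ we conclude $\Psi_S(x_i)\neq 0$. In view of Proposition~\ref{th:invertibility} this also records the practical consequence that, for a GCD closed set $S$ whose LCM matrix $[S]$ is singular, the offending diagonal value must occur at some $x_i$ with $x_i\leq\mathrm{lcm}(C_S(x_i))$.
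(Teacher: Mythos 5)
Your argument is correct and is essentially the paper's own proof in a slightly different dress: the paper multiplies $\Psi_S(x_i)$ by $\mathrm{lcm}(C_S(x_i))$ and observes the result is $\tfrac{1}{a}$ plus an integer (hence nonzero), while you multiply by $x_i$ and reduce modulo $q=x_i/\mathrm{lcm}(C_S(x_i))$, which is the same computation scaled by $q=a$. The one step the paper leaves implicit, namely that every $x_j\in S$ with $x_j\prec x_i$ divides $\mathrm{lcm}(C_S(x_i))$, is exactly the maximal-chain fact you spell out, so nothing is missing.
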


\begin{proof}
Suppose that $x_i=a(\text{lcm}(C_S(x_i)))$, where $a\in\mathbb{Z}$ and $a>1$. Then
\begin{align*}
&\Psi_{S}(x_i)=\sum_{x_j\preceq x_i}\frac{\mu_S(x_j,x_i)}{x_j}\\
&=\frac{1}{\text{lcm}(C_S(x_i))}\left(\frac{\text{lcm}(C_S(x_i))}{x_i}+\sum_{x_j\prec x_i}\frac{\text{lcm}(C_S(x_i))}{x_j}\mu_S(x_j,x_i)\right)\\
&=\frac{1}{\text{lcm}(C_S(x_i))}\Bigg(\underbrace{\frac{1}{a}}_{\not\in \mathbb{Z}}+\sum_{x_j\prec x_i}\underbrace{\frac{\text{lcm}(C_S(x_i))}{x_j}}_{\in\mathbb{Z}}\underbrace{\mu_S(x_j,x_i)}_{\in\mathbb{Z}}\Bigg)\neq0.
\end{align*}
\end{proof}

\section{Solving the 9 element case completely}

The method used by Alt{\i}n{\i}\c{s}{\i}k and Alt{\i}nta\c{s} in \cite{AA17} is based on constructing a GCD closed set $S_8=\{x_1,x_2,\ldots,x_8\}$ for which the matrix $[S_8]$ is singular. They then go through all the different ways to insert a new positive integer $a$ in the set so that the set $S_9:=S_8\cup\{a\}$ would still remain GCD closed. If this turns out to be possible, the LCM matrix $[S_9]$ is also guaranteed to remain singular as it is similar to a matrix that has one of its leading principal minors equal to zero. Unfortunately, however, this method does not find the singular LCM matrices of order 9 whose every proper principal submatrix is nonsingular. This means that if we are interested in finding all possible semilattice structures with $9$ elements that correspond to some singular LCM matrix $[S]$, the method of Alt{\i}n{\i}\c{s}{\i}k and Alt{\i}nta\c{s} only gives a partial solution and cannot find all of them.

There are 5994 different meet semilattice structures with 9 elements (in comparison to only 1078 meet semilattice structures with 8 elements and 37622 structures with 10 elements). Since we are only interested in those structures that can be used to produce singular LCM matrices, Theorem \ref{th:AB-joukko} and Proposition \ref{th:invertibility} allow us to sieve out all the semilattice structures where every element generates a double-chain set. We are going to utilize program Sage in order to achieve this. Very slow but working way to generate all semilattices is
\\ \\
\verb+SLall = [MeetSemilattice(P) for P in Posets(9) if P.is_meet_semilattice()]+
\\ \\
And then we select only those possessing an element $e$ such that it covers at least three elements, and the subposet of the meet semilattice generated by those elements with generating elements removed has width greater than or equal to three:
\\ \\
\verb+SLspecial = []+\\
\verb+for SL in SLall:+\\
\verb+     for e in SL:+\\
\verb+         if len(SL.lower_covers(e)) >= 3:+\\
\verb+             elms = SL.lower_covers(e)+\\
\verb+             SL1 = SL.submeetsemilattice(elms)+\\
\verb+             P = SL1.subposet([e for e in SL1 if e not in elms])+\\
\verb+             if P.width() >= 3:+\\
\verb+                 Lspecial.append(L)+\\
\verb+                 break+\\

This leaves us with 13 semilattice structures shown in Figure \ref{fig:tapaukset} that we need to consider separately.

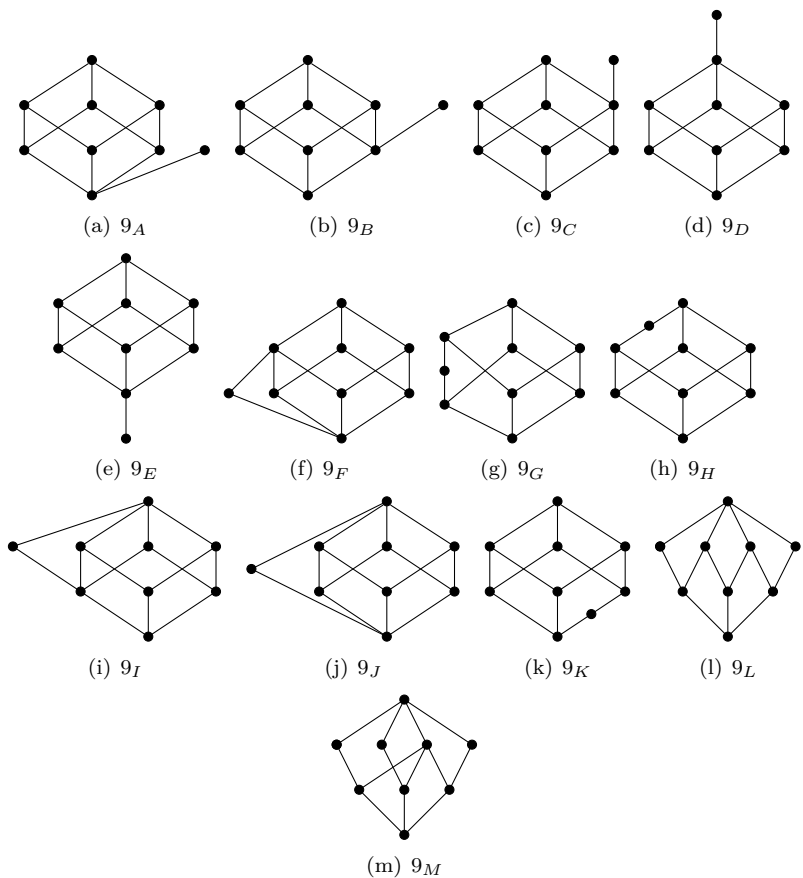
\begin{figure}[h!]
\centering
\subfigure[$9_A$]
{{\scalefont{0.8}
\begin{tikzpicture}[scale=0.6]
\draw (1,0)--(-0.5,1)--(-0.5,2)--(0.92,2.92);
\draw (1,0)--(1,1)--(-0.5,2);
\draw (1,0)--(3.5,1);
\draw (-0.5,1)--(1,2)--(1,2.9);
\draw (1,0)--(2.5,1)--(2.5,2)--(1.08,2.92);
\draw (1,1)--(2.5,2);
\draw (2.5,1)--(1,2);
\draw [fill] (-0.5,1) circle [radius=0.1];
\draw [fill] (1,1) circle [radius=0.1];
\draw [fill] (2.5,1) circle [radius=0.1];
\draw [fill] (1,0) circle [radius=0.1];
\draw [fill] (-0.5,2) circle [radius=0.1];
\draw [fill] (1,2) circle [radius=0.1];
\draw [fill] (2.5,2) circle [radius=0.1];
\draw [fill] (1,3) circle [radius=0.1];
\draw [fill] (3.5,1) circle [radius=0.1];
\end{tikzpicture}}
}
\subfigure[$9_B$]
{{\scalefont{0.8}
\begin{tikzpicture}[scale=0.6]
\draw (1,0)--(-0.5,1)--(-0.5,2)--(0.92,2.92);
\draw (1,0)--(1,1)--(-0.5,2);
\draw (2.5,1)--(4,2);
\draw (-0.5,1)--(1,2)--(1,2.9);
\draw (1,0)--(2.5,1)--(2.5,2)--(1.08,2.92);
\draw (1,1)--(2.5,2);
\draw (2.5,1)--(1,2);
\draw [fill] (-0.5,1) circle [radius=0.1];
\draw [fill] (1,1) circle [radius=0.1];
\draw [fill] (2.5,1) circle [radius=0.1];
\draw [fill] (1,0) circle [radius=0.1];
\draw [fill] (-0.5,2) circle [radius=0.1];
\draw [fill] (1,2) circle [radius=0.1];
\draw [fill] (2.5,2) circle [radius=0.1];
\draw [fill] (1,3) circle [radius=0.1];
\draw [fill] (4,2) circle [radius=0.1];
\end{tikzpicture}}
}
\subfigure[$9_C$]
{{\scalefont{0.8}
\begin{tikzpicture}[scale=0.6]
\draw (1,0)--(-0.5,1)--(-0.5,2)--(0.92,2.92);
\draw (1,0)--(1,1)--(-0.5,2);
\draw (-0.5,1)--(1,2)--(1,2.9);
\draw (1,0)--(2.5,1)--(2.5,2)--(1.08,2.92);
\draw (1,1)--(2.5,2);
\draw (2.5,1)--(1,2);
\draw (2.5,2)--(2.5,3);
\draw [fill] (-0.5,1) circle [radius=0.1];
\draw [fill] (1,1) circle [radius=0.1];
\draw [fill] (2.5,1) circle [radius=0.1];
\draw [fill] (1,0) circle [radius=0.1];
\draw [fill] (-0.5,2) circle [radius=0.1];
\draw [fill] (1,2) circle [radius=0.1];
\draw [fill] (2.5,2) circle [radius=0.1];
\draw [fill] (1,3) circle [radius=0.1];
\draw [fill] (2.5,3) circle [radius=0.1];
\end{tikzpicture}}
}
\subfigure[$9_D$]
{{\scalefont{0.8}
\begin{tikzpicture}[scale=0.6]
\draw (1,0)--(-0.5,1)--(-0.5,2)--(0.92,2.92);
\draw (1,0)--(1,1)--(-0.5,2);
\draw (-0.5,1)--(1,2)--(1,2.9);
\draw (1,0)--(2.5,1)--(2.5,2)--(1.08,2.92);
\draw (1,1)--(2.5,2);
\draw (2.5,1)--(1,2);
\draw (1,3)--(1,4);
\draw [fill] (-0.5,1) circle [radius=0.1];
\draw [fill] (1,1) circle [radius=0.1];
\draw [fill] (2.5,1) circle [radius=0.1];
\draw [fill] (1,0) circle [radius=0.1];
\draw [fill] (-0.5,2) circle [radius=0.1];
\draw [fill] (1,2) circle [radius=0.1];
\draw [fill] (2.5,2) circle [radius=0.1];
\draw [fill] (1,3) circle [radius=0.1];
\draw [fill] (1,4) circle [radius=0.1];
\end{tikzpicture}}
}
\subfigure[$9_E$]
{{\scalefont{0.8}
\begin{tikzpicture}[scale=0.6]
\draw (1,0)--(-0.5,1)--(-0.5,2)--(0.92,2.92);
\draw (1,0)--(1,1)--(-0.5,2);
\draw (-0.5,1)--(1,2)--(1,2.9);
\draw (1,0)--(2.5,1)--(2.5,2)--(1.08,2.92);
\draw (1,1)--(2.5,2);
\draw (2.5,1)--(1,2);
\draw (1,0)--(1,-1);
\draw [fill] (-0.5,1) circle [radius=0.1];
\draw [fill] (1,1) circle [radius=0.1];
\draw [fill] (2.5,1) circle [radius=0.1];
\draw [fill] (1,0) circle [radius=0.1];
\draw [fill] (-0.5,2) circle [radius=0.1];
\draw [fill] (1,2) circle [radius=0.1];
\draw [fill] (2.5,2) circle [radius=0.1];
\draw [fill] (1,3) circle [radius=0.1];
\draw [fill] (1,-1) circle [radius=0.1];
\end{tikzpicture}}
}
\subfigure[$9_F$]
{{\scalefont{0.8}
\begin{tikzpicture}[scale=0.6]
\draw (1,0)--(-0.5,1)--(-0.5,2)--(0.92,2.92);
\draw (1,0)--(1,1)--(-0.5,2);
\draw (-0.5,1)--(1,2)--(1,2.9);
\draw (1,0)--(2.5,1)--(2.5,2)--(1.08,2.92);
\draw (1,1)--(2.5,2);
\draw (2.5,1)--(1,2);
\draw (1,0)--(-1.5,1)--(-0.5,2);
\draw [fill] (-0.5,1) circle [radius=0.1];
\draw [fill] (1,1) circle [radius=0.1];
\draw [fill] (2.5,1) circle [radius=0.1];
\draw [fill] (1,0) circle [radius=0.1];
\draw [fill] (-0.5,2) circle [radius=0.1];
\draw [fill] (1,2) circle [radius=0.1];
\draw [fill] (2.5,2) circle [radius=0.1];
\draw [fill] (1,3) circle [radius=0.1];
\draw [fill] (-1.5,1) circle [radius=0.1];
\end{tikzpicture}}
}
\subfigure[$9_G$]
{{\scalefont{0.8}
\begin{tikzpicture}[scale=0.6]
\draw (1,0)--(-0.5,0.75)--(-0.5,2.25)--(1,3);
\draw (1,0)--(1,1)--(-0.5,2.25);
\draw (-0.5,0.75)--(1,2)--(1,3);
\draw (1,0)--(2.5,1)--(2.5,2)--(1,3);
\draw (1,1)--(2.5,2);
\draw (2.5,1)--(1,2);
\draw [fill] (-0.5,0.75) circle [radius=0.1];
\draw [fill] (1,1) circle [radius=0.1];
\draw [fill] (2.5,1) circle [radius=0.1];
\draw [fill] (1,0) circle [radius=0.1];
\draw [fill] (-0.5,2.25) circle [radius=0.1];
\draw [fill] (1,2) circle [radius=0.1];
\draw [fill] (2.5,2) circle [radius=0.1];
\draw [fill] (1,3) circle [radius=0.1];
\draw [fill] (-0.5,1.5) circle [radius=0.1];
\end{tikzpicture}}
}
\subfigure[$9_H$]
{{\scalefont{0.8}
\begin{tikzpicture}[scale=0.6]
\draw (1,0)--(-0.5,1)--(-0.5,2)--(0.92,2.92);
\draw (1,0)--(1,1)--(-0.5,2);
\draw (-0.5,1)--(1,2)--(1,2.9);
\draw (1,0)--(2.5,1)--(2.5,2)--(1.08,2.92);
\draw (1,1)--(2.5,2);
\draw (2.5,1)--(1,2);
\draw [fill] (-0.5,1) circle [radius=0.1];
\draw [fill] (1,1) circle [radius=0.1];
\draw [fill] (2.5,1) circle [radius=0.1];
\draw [fill] (1,0) circle [radius=0.1];
\draw [fill] (-0.5,2) circle [radius=0.1];
\draw [fill] (1,2) circle [radius=0.1];
\draw [fill] (2.5,2) circle [radius=0.1];
\draw [fill] (1,3) circle [radius=0.1];
\draw [fill] (0.25,2.5) circle [radius=0.1];
\end{tikzpicture}}
}
\subfigure[$9_I$]
{{\scalefont{0.8}
\begin{tikzpicture}[scale=0.6]
\draw (1,0)--(-0.5,1)--(-0.5,2)--(0.92,2.92);
\draw (1,0)--(1,1)--(-0.5,2);
\draw (-0.5,1)--(1,2)--(1,2.9);
\draw (1,0)--(2.5,1)--(2.5,2)--(1.08,2.92);
\draw (1,1)--(2.5,2);
\draw (-0.5,1)--(-2,2)--(1,3);
\draw (2.5,1)--(1,2);
\draw [fill] (-0.5,1) circle [radius=0.1];
\draw [fill] (1,1) circle [radius=0.1];
\draw [fill] (2.5,1) circle [radius=0.1];
\draw [fill] (1,0) circle [radius=0.1];
\draw [fill] (-0.5,2) circle [radius=0.1];
\draw [fill] (1,2) circle [radius=0.1];
\draw [fill] (2.5,2) circle [radius=0.1];
\draw [fill] (-2,2) circle [radius=0.1];
\draw [fill] (1,3) circle [radius=0.1];
\end{tikzpicture}}
}
\subfigure[$9_J$]
{{\scalefont{0.8}
\begin{tikzpicture}[scale=0.6]
\draw (1,0)--(-0.5,1)--(-0.5,2)--(0.92,2.92);
\draw (1,0)--(1,1)--(-0.5,2);
\draw (-0.5,1)--(1,2)--(1,2.9);
\draw (1,0)--(2.5,1)--(2.5,2)--(1.08,2.92);
\draw (1,1)--(2.5,2);
\draw (2.5,1)--(1,2);
\draw (1,0)--(-2,1.5)--(1,3);
\draw [fill] (-0.5,1) circle [radius=0.1];
\draw [fill] (1,1) circle [radius=0.1];
\draw [fill] (2.5,1) circle [radius=0.1];
\draw [fill] (1,0) circle [radius=0.1];
\draw [fill] (-0.5,2) circle [radius=0.1];
\draw [fill] (1,2) circle [radius=0.1];
\draw [fill] (2.5,2) circle [radius=0.1];
\draw [fill] (1,3) circle [radius=0.1];
\draw [fill] (-2,1.5) circle [radius=0.1];
\end{tikzpicture}}
}
\subfigure[$9_K$]
{{\scalefont{0.8}
\begin{tikzpicture}[scale=0.6]
\draw (1,0)--(-0.5,1)--(-0.5,2)--(0.92,2.92);
\draw (1,0)--(1,1)--(-0.5,2);
\draw (-0.5,1)--(1,2)--(1,2.9);
\draw (1,0)--(2.5,1)--(2.5,2)--(1.08,2.92);
\draw (1,1)--(2.5,2);
\draw (2.5,1)--(1,2);
\draw [fill] (-0.5,1) circle [radius=0.1];
\draw [fill] (1,1) circle [radius=0.1];
\draw [fill] (2.5,1) circle [radius=0.1];
\draw [fill] (1,0) circle [radius=0.1];
\draw [fill] (-0.5,2) circle [radius=0.1];
\draw [fill] (1,2) circle [radius=0.1];
\draw [fill] (2.5,2) circle [radius=0.1];
\draw [fill] (1,3) circle [radius=0.1];
\draw [fill] (1.75,0.5) circle [radius=0.1];

\end{tikzpicture}}
}
\subfigure[$9_L$]
{{\scalefont{0.8}
\begin{tikzpicture}[scale=0.6]
\draw (1,0)--(0,1)--(-0.5,2)--(1,3);
\draw (0,1)--(0.5,2)--(1,3);
\draw (1,0)--(1,1)--(0.5,2);
\draw (1,1)--(1.5,2)--(1,3);
\draw (2,1)--(1.5,2);
\draw (1,0)--(2,1)--(2.5,2)--(1,3);
\draw [fill] (-0.5,2) circle [radius=0.1];
\draw [fill] (0,1) circle [radius=0.1];
\draw [fill] (2,1) circle [radius=0.1];
\draw [fill] (0.5,2) circle [radius=0.1];
\draw [fill] (-0.5,2) circle [radius=0.1];
\draw [fill] (1,1) circle [radius=0.1];
\draw [fill] (1.5,2) circle [radius=0.1];
\draw [fill] (1,3) circle [radius=0.1];
\draw [fill] (2.5,2) circle [radius=0.1];
\draw [fill] (1,0) circle [radius=0.1];
\end{tikzpicture}}
}
\subfigure[$9_M$]
{{\scalefont{0.8}
\begin{tikzpicture}[scale=0.6]
\draw (1,0)--(0,1)--(-0.5,2)--(1,3);
\draw (0.5,2)--(1,3);
\draw (1,0)--(1,1)--(0.5,2);
\draw (1,1)--(1.5,2)--(1,3);
\draw (2,1)--(1.5,2)--(0,1);
\draw (1,0)--(2,1)--(2.5,2)--(1,3);
\draw [fill] (-0.5,2) circle [radius=0.1];
\draw [fill] (0,1) circle [radius=0.1];
\draw [fill] (2,1) circle [radius=0.1];
\draw [fill] (0.5,2) circle [radius=0.1];
\draw [fill] (-0.5,2) circle [radius=0.1];
\draw [fill] (1,1) circle [radius=0.1];
\draw [fill] (1.5,2) circle [radius=0.1];
\draw [fill] (1,3) circle [radius=0.1];
\draw [fill] (2.5,2) circle [radius=0.1];
\draw [fill] (1,0) circle [radius=0.1];

\end{tikzpicture}}
}
\caption{The 13 semilattice structures that need to be studied separately.}\label{fig:tapaukset}
\end{figure}

\subsection*{Inserting a new maximal element}

Let us consider the first four structures $9_A$, $9_B$, $9_C$ and $9_D$. These four structures can also be obtained by using the methods in \cite{AA17}, but by using a slightly more general approach to construct them.

\begin{itemize}
\item Start from an 8 element set $S_8$ whose LCM matrix $[S_8]$ is singular. We may use, e.g., the set $S_8=\{1,2,3,5,66,70,255,39270\}$ illustrated in Figure \ref{Fig:singularset} (see also \cite{MHM20}, Example 5.3). The figure also displays the indexing of each element $x_i\in S_8$.
\item Choose an integer $a$ such that $\gcd(x_i,a)=1$ for all $x_i\in S_8$. For example in the case of the above mentioned set we may choose $a=13$.
\item In the case $9_A$ define $x_9=a x_1$ and $S_9=S_8\cup \{x_9\}$.
\item In the case $9_B$ define $x_9=ax_4$ and $S_9=S_8\cup \{x_9\}$.
\item In the case $9_C$ define $x_9=ax_7$ and $S_9=S_8\cup \{x_9\}$.
\item In the case $9_D$ define $x_9=ax_8$ and $S_9=S_8\cup \{x_9\}$.
\item In any case we have $\mu_{S_8}(x_i,x_8)=\mu_{S_9}(x_i,x_8)$ for all $i=1,2,\ldots,8$ and $x_9\,\nmid\,x_8$, which implies that $\mu_{S_8}(x_9,x_8)=0$. Thus $\Psi_{S_9}(x_8)=\Psi_{S_8}(x_8)=0$ and we may conclude that the matrix $[S_9]$ is singular.
\end{itemize}

\begin{figure}[h!]
\centering
{\scalefont{0.8}
\begin{tikzpicture}[scale=1]
\draw (1,0)--(-0.5,1)--(-0.5,2)--(1,3);
\draw (1,0)--(1,1)--(-0.5,2);
\draw (-0.5,1)--(1,2)--(1,2.9);
\draw (1,0)--(2.5,1)--(2.5,2)--(1,3);
\draw (1,1)--(2.5,2);
\draw (2.5,1)--(1,2);
\draw [fill] (-0.5,1) circle [radius=0.1];
\draw [fill] (1,1) circle [radius=0.1];
\draw [fill] (2.5,1) circle [radius=0.1];
\draw [fill] (1,0) circle [radius=0.1];
\draw [fill] (-0.5,2) circle [radius=0.1];
\draw [fill] (1,2) circle [radius=0.1];
\draw [fill] (2.5,2) circle [radius=0.1];
\draw [fill] (1,3) circle [radius=0.1];
\node [right] at (1.1,0) {$x_1=1$};
\node [right] at (-0.4,1) {$x_2=2$};
\node [right] at (1.1,1) {$x_3=3$};
\node [right] at (2.6,1) {$x_4=5$};
\node [right] at (-0.4,2) {$x_5=66$};
\node [right] at (1.1,2) {$x_6=70$};
\node [right] at (2.6,2) {$x_7=255$};
\node [right] at (1.1,3) {$x_8=39270$};
\end{tikzpicture}}
\caption{An example of a GCD closed set with eight elements such that the corresponding LCM matrix is singular.}\label{Fig:singularset}
\end{figure}
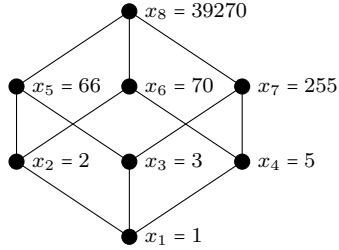

\subsection*{Inserting a new minimum element}

Let us now consider the semilattice structure $9_E$. This kind of approach was introduced by Hong \cite{Hong99} in 1999 as it enables one to construct a singular LCM matrix of arbitrary size $n\geq 9$. Also the method used by Alt{\i}n{\i}\c{s}{\i}k and Alt{\i}nta\c{s} finds this semilattice structure possible. As we did earlier, we are going to address this special case from our lattice theoretic point of view. 

\begin{itemize}
\item Again we start from any 8 element set $S_8$ for which the matrix $[S_8]$ is singular.
\item Choose an arbitrary integer $a>1$.
\item Define $x_1'=a$, $x_{i+1}'=a x_i$ for all $i=1,2,\ldots,8$ and $S_9=\{x_1',x_2',\ldots,x_8',x_9'\}$
\item We have $\mu_{S_8}(x_i,x_8)=\mu_{S_9}(x_{i+1}',x_9')$ for all $i=1,2,\ldots,8$ and $\mu_{S_9}(x_1',x_9')=0$, which implies that $\Psi_{S_9}(x_9)=\frac{1}{a}\Psi_{S_8}(x_8)=0$. Hence the matrix $[S_9]$ is singular.
\end{itemize}

\subsection*{Inserting a new element between two comparable elements}

Let us now focus on the semilattices $9_F$, $9_G$ and $9_H$. The method of Alt{\i}n{\i}\c{s}{\i}k and Alt{\i}nta\c{s} finds all three structures possible. We shall again provide a slightly more general solution.

\begin{itemize}
\item Start from any 8 element set $S_8$ such that $x_1=1$ and the matrix $[S_8]$ is singular.
\item Index the elements so that $x_5$ is an upper bound for $x_2$ and $x_3$.
\item It can be shown that $\text{lcm}(x_2,x_3)\,|\,x_5$, but $x_5>\mathrm{lcm}(x_2,x_3)$, see e.g. \cite[Theorem 2.4]{AA17}.
\item Let us denote $x_5=a\text{lcm}(x_2,x_3)$, where $a>1$.
\item In many (possibly in all) known examples the number $a$ turns out to be a prime number that does not divide either $x_2$ or $x_3$, which means that $\gcd(a,x_i)=1$ for all $x_i\in S_8\setminus\{x_8\}$. In the unlikely event when it would not be possible to apply this method to the element $x_5$ we can always try again with the elements $x_6$ and $x_7$ and redraw the lattice by putting either of these elements on the left side of the cube. In the example presented in Figure \ref{Fig:singularset} this method can be applied to any of the elements $x_5$, $x_6$ or $x_7$ since we have $x_5=66=11\cdot\mathrm{lcm}(2,3)$, $x_6=70=7\cdot\mathrm{lcm}(2,5)$ and $x_7=255=17\cdot\mathrm{lcm}(3,5)$.
\item In the case $9_F$  define $x_1'=x_1=1$, $x_2'=a$ and $x_i'=x_{i-1}$ for $i=3,4,\ldots,8,9$. We have $\mu_{S'}(x_2',x_9')=0$ and therefore $\Psi_{S_9}(x_9')=\Psi_{S_8}(x_8)=0$ and thus $[S_9]$ is singular.
\item In the case $9_G$ define $x_i'=x_i$ for $i=1,2,3,4$, $x_5'=ax_2$ and $x_i'=x_{i-1}$ for $i=6,7,8,9$. Also in this case $\mu_{S_9}(x_5',x_9')=0$ and furthermore $\Psi_{S_9}(x_9')=\Psi_{S_8}(x_8)=0$ making the matrix $[S_9]$ is singular.
\item In the case $9_H$ define $x_i'=x_i$ for $i=1,2,3,4,6,7$, $x_5'=\mathrm{lcm}(x_2,x_3)$, $x_8'=x_5=a\mathrm{lcm}(x_2,x_3)$ and $x_9'=x_8$. Again $\mu_{S_9}(x_5',x_9')=0$, $\Psi_{S_9}(x_9')=\Psi_{S_8}(x_8)=0$ and the matrix $[S_9]$ is singular.
\end{itemize}

\subsection*{Two irreducible cases}

Let us now take the semilattices $9_I$ and $9_J$ into consideration. 

\begin{itemize}
\item With the method used by Alt{\i}n{\i}\c{s}{\i}k and Alt{\i}nta\c{s} it is impossible to obtain these two structures.
\item Nevertheless, both of these structures are actually possible altough they cannot be reduced to the $8$ element case.
\item In 1997 Haukkanen, Wang and Sillanpää \cite{HauWanSil} disproved the Bourque-Ligh conjecture by providing the counterexample $S=\{1, 2, 3, 4, 5, 6, 10, 45, 180\}$, which belongs to the class $9_I$.
\item Also the structure $9_J$ is possible, for example the set $$S=\{1,5,11,17,19,748,1463,2907,4476780\}$$ is of this type and its LCM matrix is singular. This example is illustrated in Figure \ref{Fig:singularset2}. The singularity of the matrix $[S_9]$ follows from the calculation
\begin{align*}
\Psi_{S_{9}}(x_9)&=\frac{1}{4476780}-\frac{1}{2907}-\frac{1}{1463}-\frac{1}{748}-\frac{1}{255}+\frac{1}{19}+\frac{1}{17}+\frac{1}{11}+\frac{0}{1}=0.
\end{align*}
\end{itemize}

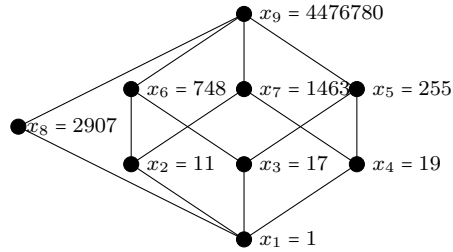
\begin{figure}[h!]
\centering
{\scalefont{0.8}
\begin{tikzpicture}[scale=1]
\draw (1,0)--(-0.5,1)--(-0.5,2)--(1,3);
\draw (1,0)--(1,1)--(-0.5,2);
\draw (-0.5,1)--(1,2)--(1,2.9);
\draw (1,0)--(2.5,1)--(2.5,2)--(1,3);
\draw (1,1)--(2.5,2);
\draw (2.5,1)--(1,2);
\draw (1,0)--(-2,1.5)--(1,3);
\draw [fill] (-2,1.5) circle [radius=0.1];
\draw [fill] (-0.5,1) circle [radius=0.1];
\draw [fill] (1,1) circle [radius=0.1];
\draw [fill] (2.5,1) circle [radius=0.1];
\draw [fill] (1,0) circle [radius=0.1];
\draw [fill] (-0.5,2) circle [radius=0.1];
\draw [fill] (1,2) circle [radius=0.1];
\draw [fill] (2.5,2) circle [radius=0.1];
\draw [fill] (1,3) circle [radius=0.1];
\node [right] at (1.1,0) {$x_1=1$};
\node [right] at (-0.4,1) {$x_2=11$};
\node [right] at (1.1,1) {$x_3=17$};
\node [right] at (2.6,1) {$x_4=19$};
\node [right] at (-0.4,2) {$x_6=748$};
\node [right] at (1.1,2) {$x_7=1463$};
\node [right] at (2.6,2) {$x_5=255$};
\node [right] at (-2,1.5) {$x_8=2907$};
\node [right] at (1.1,3) {$x_9=4476780$};
\end{tikzpicture}}
\caption{An example of a GCD closed set belonging to the category $9_J$ such that the corresponding LCM matrix is singular.}\label{Fig:singularset2}
\end{figure}

\subsection*{One negative example of inserting a new element into the middle}

We shall now focus on the structure $9_K$. It would essentially belong to the earlier category ``inserting a new element between two comparable elements'', but there are good reasons why we should study it separately.

\begin{itemize}
\item Alt{\i}n{\i}\c{s}{\i}k and Alt{\i}nta\c{s} attempted to find an example that would belong to this class, but they were not successful. They ended up conjecturing that this structure is not possible at all, see \cite[Conjecture 4.1]{AA17}.
\item Since $\mu_{S_9}(x_2,x_9)=0$, there appears to be no difference between this and the corresponding 8 element case.
\item However, it turns out that the existence of the element $x_2$ changes the situation quite drastically.
\item Without loss of generality we may assume that $x_1=1$ and that the elements $x_2$ and $x_3$ are named as shown in Figure \ref{fig:9K}. Because we have $x_1\,|\,x_2\,|\,x_3$, the element $x_3$ cannot be a prime number. We have
\[
\Psi_{S_9}(x_9)=\underbrace{\frac{1}{x_9}-\frac{1}{x_8}-\frac{1}{x_7}-\frac{1}{x_6}}_{<0}+\underbrace{\frac{1}{x_5}+\frac{1}{x_4}+\frac{1}{x_3}-\frac{1}{x_1}}_{:=K<0}<0,
\]
since
\[
K\leq -1+\max\left(\frac{1}{2}+\frac{1}{3}+\frac{1}{25},\frac{1}{4}+\frac{1}{3}+\frac{1}{5},\frac{1}{2}+\frac{1}{9}+\frac{1}{5}\right)<0.
\]
This shows that the class $9_K$ is not possible, which also solves the Conjecture 4.1 by Alt{\i}n{\i}\c{s}{\i}k and Alt{\i}nta\c{s} (the fact that $x_3$ cannot be a prime number also prevents the LCM matrix of the set $S_8':=S_9\setminus\{x_2\}$ to be singular since the values $\Psi_{S_9}(x_9)$ and $\Psi_{S_8'}(x_8')$ are exactly the same).
\end{itemize}

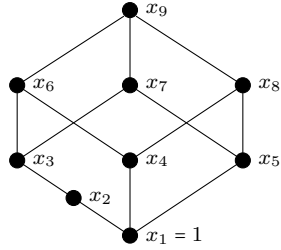
\begin{figure}[h!]
\begin{center}
{{\scalefont{0.8}
\begin{tikzpicture}[scale=1]
\draw (1,0)--(-0.5,1)--(-0.5,2)--(0.92,2.92);
\draw (1,0)--(1,1)--(-0.5,2);
\draw (-0.5,1)--(1,2)--(1,2.9);
\draw (1,0)--(2.5,1)--(2.5,2)--(1.08,2.92);
\draw (1,1)--(2.5,2);
\draw (2.5,1)--(1,2);
\draw [fill] (-0.5,1) circle [radius=0.1];
\draw [fill] (1,1) circle [radius=0.1];
\draw [fill] (2.5,1) circle [radius=0.1];
\draw [fill] (1,0) circle [radius=0.1];
\draw [fill] (-0.5,2) circle [radius=0.1];
\draw [fill] (1,2) circle [radius=0.1];
\draw [fill] (2.5,2) circle [radius=0.1];
\draw [fill] (1,3) circle [radius=0.1];
\draw [fill] (0.25,0.5) circle [radius=0.1];
\node [right] at (-0.5,1) {\ $x_3$};
\node [right] at (0.25,0.5) {\ $x_2$};
\node [right] at (1,0) {\ $x_1=1$};
\node [right] at (1,1) {\ $x_4$};
\node [right] at (2.5,1) {\ $x_5$};
\node [right] at (-0.5,2) {\ $x_6$};
\node [right] at (1,2) {\ $x_7$};
\node [right] at (2.5,2) {\ $x_8$};
\node [right] at (1,3) {\ $x_9$};
\end{tikzpicture}}
}
\end{center}
\caption{The structure $9_K$, where the different elements $x_i$ have been specified.}\label{fig:9K}
\end{figure}

\subsection*{Two more impossible cases}

Finally, we are left with the structures $9_L$ and $9_M$ shown in Figure \ref{fig:ljam}.

\begin{itemize}
\item In both of these structures every element except for the maximum element $x_9$ generates a double-chain set, which means that we only need to check the value of $\Psi_S(x_9)$.
\item In both cases we have
\begin{align*}
\Psi_S(x_9)&=\frac{1}{x_9}-\frac{1}{x_8}-\frac{1}{x_7}-\frac{1}{x_6}-\frac{1}{x_5}+\frac{1}{x_4}+\frac{1}{x_3}+\frac{1}{x_2}+\frac{0}{x_1}\\
&=\underbrace{\frac{1}{x_9}}_{>0}+\underbrace{\left(\frac{1}{x_4}-\frac{1}{x_7}-\frac{1}{x_8}\right)}_{>0}+\underbrace{\left(\frac{1}{x_3}-\frac{1}{x_6}\right)}_{>0}+\underbrace{\left(\frac{1}{x_2}-\frac{1}{x_5}\right)}_{>0}>0,
\end{align*}
which implies that the matrix $[S_9]$ has to be nonsingular.
\end{itemize}

\setcounter{subfigure}{11}

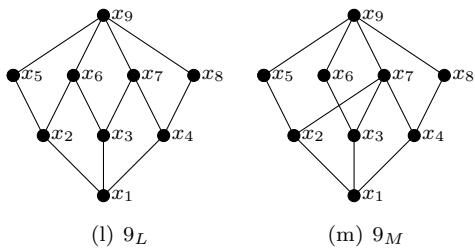
\begin{figure}[h!]
\begin{center}
\subfigure[$9_L$]
{{\scalefont{0.8}
\begin{tikzpicture}[scale=0.8]
\draw (1,0)--(0,1)--(-0.5,2)--(1,3);
\draw (0,1)--(0.5,2)--(1,3);
\draw (1,0)--(1,1)--(0.5,2);
\draw (1,1)--(1.5,2)--(1,3);
\draw (2,1)--(1.5,2);
\draw (1,0)--(2,1)--(2.5,2)--(1,3);
\draw [fill] (-0.5,2) circle [radius=0.1];
\draw [fill] (0,1) circle [radius=0.1];
\draw [fill] (2,1) circle [radius=0.1];
\draw [fill] (0.5,2) circle [radius=0.1];
\draw [fill] (-0.5,2) circle [radius=0.1];
\draw [fill] (1,1) circle [radius=0.1];
\draw [fill] (1.5,2) circle [radius=0.1];
\draw [fill] (1,3) circle [radius=0.1];
\draw [fill] (2.5,2) circle [radius=0.1];
\draw [fill] (1,0) circle [radius=0.1];
\node [right] at (0,1) {$x_2$};
\node [right] at (2,1) {$x_4$};
\node [right] at (0.5,2) {$x_6$};
\node [right] at (-0.5,2) {$x_5$};
\node [right] at (1,1) {$x_3$};
\node [right] at (1.5,2) {$x_7$};
\node [right] at (1,3) {$x_9$};
\node [right] at (2.5,2) {$x_8$};
\node [right] at (1,0) {$x_1$};
\end{tikzpicture}}
}
\subfigure[$9_M$]
{{\scalefont{0.8}
\begin{tikzpicture}[scale=0.8]
\draw (1,0)--(0,1)--(-0.5,2)--(1,3);
\draw (0.5,2)--(1,3);
\draw (1,0)--(1,1)--(0.5,2);
\draw (1,1)--(1.5,2)--(1,3);
\draw (2,1)--(1.5,2)--(0,1);
\draw (1,0)--(2,1)--(2.5,2)--(1,3);
\draw [fill] (-0.5,2) circle [radius=0.1];
\draw [fill] (0,1) circle [radius=0.1];
\draw [fill] (2,1) circle [radius=0.1];
\draw [fill] (0.5,2) circle [radius=0.1];
\draw [fill] (-0.5,2) circle [radius=0.1];
\draw [fill] (1,1) circle [radius=0.1];
\draw [fill] (1.5,2) circle [radius=0.1];
\draw [fill] (1,3) circle [radius=0.1];
\draw [fill] (2.5,2) circle [radius=0.1];
\draw [fill] (1,0) circle [radius=0.1];
\node [right] at (0,1) {$x_2$};
\node [right] at (2,1) {$x_4$};
\node [right] at (0.5,2) {$x_6$};
\node [right] at (-0.5,2) {$x_5$};
\node [right] at (1,1) {$x_3$};
\node [right] at (1.5,2) {$x_7$};
\node [right] at (1,3) {$x_9$};
\node [right] at (2.5,2) {$x_8$};
\node [right] at (1,0) {$x_1$};
\end{tikzpicture}}}
\end{center}
\caption{Illustrations of the structures $9_L$ and $9_M$, where the choices for different elements $x_i$ are also displayed.}\label{fig:ljam}
\end{figure}

\subsection*{Some conclusions}

We were able to completely solve the 9 element case and show that there are exactly $10$ possible semilattice structures that can be used to construct a GCD closed 9-element set whose LCM matrix is singular. Based on this investigation we can also say that if $S$ is a GCD closed set with $9$ elements and the matrix $[S]$ is singular, then the semilattice $(S,|)$ has the cube as its subsemilattice. Alt{\i}n{\i}\c{s}{\i}k and Alt{\i}nta\c{s} actually conjecture that this is the case with all sets $S$ whose LCM matrix is singular, and we shall consider this conjecture in the next section. In theory it would now be possible to proceed to the next case and try to find all possible semilattice structures with $10$ elements such that there is a corresponding set of positive integers whose LCM matrix is singular. However, as mentioned earlier there are $37622$ meet semilattice structures with $10$ elements and it turns out that there are $166$ structures where at least one element does not generate a double-chain set within the structure. In other words, solving the $10$ element case would require us to either do a huge amount of tedious calculations or to develop more sophisticated methods than we currently have at our disposal.

\section{A couple of interesting examples with more than 9 elements}

In many cases the $8$ element cube semilattice can be found as a subsemilattice of the semilattice $(S,|)$ when the LCM matrix is singular, and this may even happen in quite unexpected ways. We shall illustrate this with a couple of examples.

\setcounter{subfigure}{0}

\begin{example}\label{example}
Let us consider the GCD closed sets
\[
S_{13}=\{1,2,3,13,23,25,41,75,369,533,6877,16675,3679538850\},
\]
\[
S_{14}=\{1,2,3,6,7,11,13,19,56,147,209,1859,196859,33105384312\}
\]
and
\[
S_{16}=\{1,2,3,5,7,14,20,35,54,231,255,1820,45738,137445,39308760,3029801294520\}.
\]
The Hasse diagrams of these three semilattices and also the indexing of different elements $x_i$ in each case are shown in Figures \ref{fig:examplea}, \ref{fig:exampleb} and \ref{fig:examplec}.

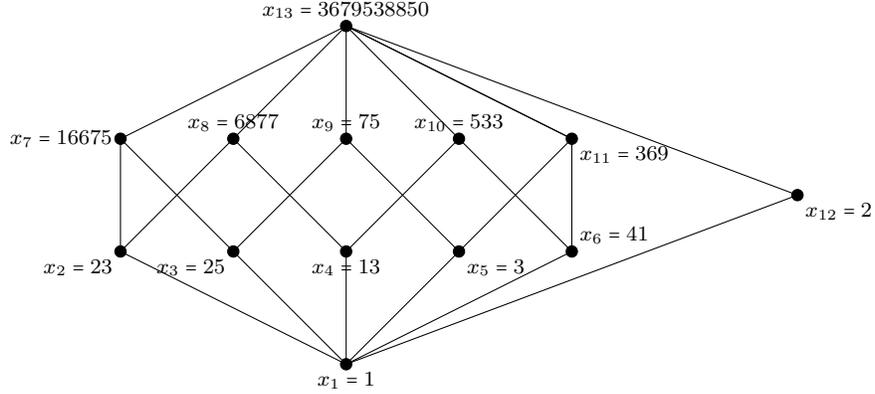
\begin{figure}[h!]
\begin{center}
{\scalefont{0.8}
\begin{tikzpicture}[scale=1.5]
\draw (0,0)--(-2,1)--(-1,2)--(0,3)--(-2,2)--(-2,1);
\draw (-2,2)--(-1,1)--(0,0)--(0,1)--(-1,2);
\draw (0,0)--(1,1)--(0,2)--(-1,1);
\draw (0,0)--(2,1)--(2,2)--(0,3)--(1,2)--(2,1);
\draw (1,1)--(2,2)--(0,3);
\draw (0,0)--(4,1.5)--(0,3);
\draw (0,2)--(0,3);
\draw (0,1)--(1,2);
\draw [fill] (0,0) circle [radius=0.05];
\draw [fill] (-2,1) circle [radius=0.05];
\draw [fill] (-1,1) circle [radius=0.05];
\draw [fill] (0,1) circle [radius=0.05];
\draw [fill] (1,1) circle [radius=0.05];
\draw [fill] (2,1) circle [radius=0.05];
\draw [fill] (-2,2) circle [radius=0.05];
\draw [fill] (-1,2) circle [radius=0.05];
\draw [fill] (0,2) circle [radius=0.05];
\draw [fill] (1,2) circle [radius=0.05];
\draw [fill] (2,2) circle [radius=0.05];
\draw [fill] (4,1.5) circle [radius=0.05];
\draw [fill] (0,3) circle [radius=0.05];
\node [below] at (0,0) {$x_1=1$};
\node [below left] at (-2,1) {$x_2=23$};
\node [below left] at (-1,1) {$x_3=25$};
\node [below] at (0,1) {$x_4=13$};
\node [below right] at (1,1) {$x_5=3$};
\node [above right] at (2,1) {$x_6=41$};
\node [left] at (-2,2) {$x_7=16675$};
\node [above] at (-1,2) {$x_8=6877$};
\node [above] at (0,2) {$x_9=75$};
\node [above] at (1,2) {$x_{10}=533$};
\node [below right] at (2,2) {$x_{11}=369$};
\node [below right] at (4,1.5) {$x_{12}=2$};
\node [above] at (0,3) {$x_{13}=3679538850$};
\end{tikzpicture}}
\end{center}
\caption{Illustrations of the three semilattice structure $S_{13}$ of Example \ref{example}. The choices for different elements $x_i$ are also shown in the picture.}\label{fig:examplea}
\end{figure}

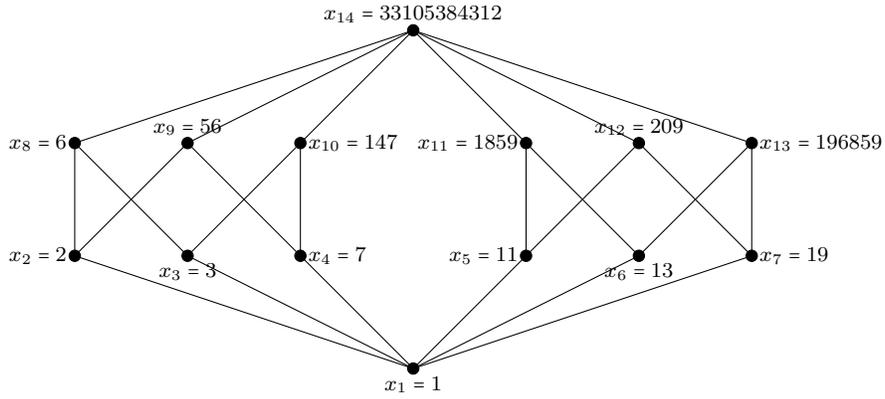
\begin{figure}[h!]
\begin{center}
{\scalefont{0.8}
\begin{tikzpicture}[scale=1.5]
\draw (0,0)--(3,1)--(3,2)--(0,3);
\draw (0,0)--(2,1)--(1,2)--(0,3);
\draw (0,0)--(1,1)--(2,2)--(0,3);
\draw (3,2)--(2,1);
\draw (2,2)--(3,1);
\draw (-1,1)--(-1,2);
\draw (0,0)--(-3,1)--(-3,2)--(0,3);
\draw (0,0)--(-2,1)--(-1,2)--(0,3);
\draw (0,0)--(-1,1)--(-2,2)--(0,3);
\draw (-3,2)--(-2,1);
\draw (-2,2)--(-3,1);
\draw (1,1)--(1,2);
\draw [fill] (0,0) circle [radius=0.05];
\draw [fill] (-3,1) circle [radius=0.05];
\draw [fill] (-2,1) circle [radius=0.05];
\draw [fill] (-1,1) circle [radius=0.05];
\draw [fill] (1,1) circle [radius=0.05];
\draw [fill] (2,1) circle [radius=0.05];
\draw [fill] (3,1) circle [radius=0.05];
\draw [fill] (-3,2) circle [radius=0.05];
\draw [fill] (-2,2) circle [radius=0.05];
\draw [fill] (-1,2) circle [radius=0.05];
\draw [fill] (1,2) circle [radius=0.05];
\draw [fill] (2,2) circle [radius=0.05];
\draw [fill] (3,2) circle [radius=0.05];
\draw [fill] (0,03) circle [radius=0.05];
\node [left] at (-3,1) {$x_2=2$};
\node [below] at (-2,1) {$x_3=3$};
\node [right] at (-1,1) {$x_4=7$};
\node [left] at (1,1) {$x_5=11$};
\node [below] at (2,1) {$x_6=13$};
\node [right] at (3,1) {$x_7=19$};
\node [left] at (-3,2) {$x_8=6$};
\node [above] at (-2,2) {$x_9=56$};
\node [below] at (0,0) {$x_1=1$};
\node [right] at (-1,2) {$x_{10}=147$};
\node [left] at (1,2) {$x_{11}=1859$};
\node [above] at (2,2) {$x_{12}=209$};
\node [right] at (3,2) {$x_{13}=196859$};
\node [above] at (0,3) {$x_{14}=33105384312$};
\end{tikzpicture}}
\end{center}
\caption{Illustrations of the semilattice structure $S_{14}$ considered in Example \ref{example}. The choices for different elements $x_i$ are also shown in the picture.}\label{fig:exampleb}
\end{figure}

\begin{figure}[h!]
\begin{center}
{\scalefont{0.8}
\begin{tikzpicture}[scale=1.5]
\draw (0,0)--(-1,1)--(-1,2)--(0,3)--(4,4)--(5,3)--(5,2)--(4,1)--(0,0);
\draw (0,0)--(0,1)--(-1,2)--(3,3)--(4,2)--(0,1)--(1,2)--(5,3)--(4,2)--(4,1);
\draw (0,1)--(1,2)--(5,3)--(4,2)--(0,1);
\draw (0,3)--(1,2)--(5,3);
\draw (4,4)--(3,3)--(3,2)--(-1,1);
\draw (-1,1)--(-1,2);
\draw (0,0)--(1,1)--(5,2)--(4,3)--(0,2)--(-1,1);
\draw (1,2)--(1,1)--(0,2);
\draw (4,1)--(3,2)--(4,3)--(4,4);
\draw (0,3)--(0,2);
\draw [fill] (0,0) circle [radius=0.05];
\draw [fill] (-1,1) circle [radius=0.05];
\draw [fill] (0,1) circle [radius=0.05];
\draw [fill] (1,1) circle [radius=0.05];
\draw [fill] (4,1) circle [radius=0.05];
\draw [fill] (-1,2) circle [radius=0.05];
\draw [fill] (0,2) circle [radius=0.05];
\draw [fill] (1,2) circle [radius=0.05];
\draw [fill] (3,2) circle [radius=0.05];
\draw [fill] (4,2) circle [radius=0.05];
\draw [fill] (5,2) circle [radius=0.05];
\draw [fill] (0,3) circle [radius=0.05];
\draw [fill] (3,3) circle [radius=0.05];
\draw [fill] (4,3) circle [radius=0.05];
\draw [fill] (5,3) circle [radius=0.05];
\draw [fill] (4,4) circle [radius=0.05];
\node [below left] at (-1,1) {$x_2=2$};
\node [below] at (0,1) {$x_3=3$};
\node [below right] at (1,1) {$x_4=5$};
\node [below right] at (4,1) {$x_5=7$};
\node [left] at (-1,2) {$x_6=54$};
\node [above] at (0,2) {$x_7=20$};
\node [below right] at (1,2) {$x_8=255$};
\node [above left] at (3,2) {$x_9=14$};
\node [below] at (0,0) {$x_1=1$};
\node [above] at (4,2) {$x_{10}=231$};
\node [below right] at (5,2) {$x_{11}=35$};
\node [above left] at (0,3) {$x_{12}=39308760$};
\node [above left] at (3,3) {$x_{13}=45738$};
\node [above] at (4,3) {$x_{14}=1820$};
\node [above right] at (5,3) {$x_{15}=137445$};
\node [above] at (4,4) {$x_{16}=3029801294520$};
\end{tikzpicture}}
\end{center}
\caption{Illustration of the semilattice structure $S_{16}$ of Example \ref{example}. The choices for different elements $x_i$ are also shown in the picture.}\label{fig:examplec}
\end{figure}

The matrices $[S_{13}]$, $[S_{14}]$ and $[S_{16}]$ are all singular since we have
\begin{align*}
\Psi_{S_{13}}(x_{13})&=\frac{1}{3679538850}-\frac{1}{16675}-\frac{1}{6877}-\frac{1}{533}-\frac{1}{369}-\frac{1}{75}-\frac{1}{2}\\
&\qquad\qquad+\frac{1}{41}+\frac{1}{25}+\frac{1}{23}+\frac{1}{13}+\frac{1}{3}+\frac{0}{1}=0,
\end{align*}
\begin{align*}
\Psi_{S_{14}}(x_{14})&=\frac{1}{33105384312}-\frac{1}{196859}-\frac{1}{1859}-\frac{1}{209}-\frac{1}{147}-\frac{1}{56}-\frac{1}{6}\\
&\qquad\qquad+\frac{1}{19}+\frac{1}{13}+\frac{1}{11}+\frac{1}{7}+\frac{1}{3}+\frac{1}{2}-\frac{1}{1}=0
\end{align*}
and
\begin{align*}
\Psi_{S_{16}}(x_{16})&=\frac{1}{3029801294520}-\frac{1}{39308760}-\frac{1}{137445}-\frac{1}{45738}-\frac{1}{1820}\\
&\qquad\qquad+\frac{1}{255}+\frac{1}{231}+\frac{1}{54}+\frac{1}{35}+\frac{1}{20}+\frac{1}{14}-\frac{1}{7}-\frac{1}{5}-\frac{1}{3}-\frac{1}{2}+\frac{1}{1}=0.
\end{align*}
First we should observe that the set $S_{13}$ does not have the $8$ element cube semilattice as its subsemilattice. This implies that Conjecture 4.2 by Alt{\i}n{\i}\c{s}{\i}k and Alt{\i}nta\c{s} \cite{AA17} does not hold in general for GCD closed sets that have at least $13$ elements (the conjecture remains still open in the cases when there are exactly $10$, $11$ or $12$ elements in the set $S$). On the other hand, in the sets $S_{14}$ and $S_{16}$ there are actually two different cube-type subsemilattices. In $S_{14}$ these two subsemilattices have the top and bottom elements in common whereas in $S_{16}$ the two subsemilattices are totally disjoint. One can only say that when it comes to GCD closed sets whose LCM matrix is singular, the semilattice structure of $(S,|)$ can sometimes take quite surprising forms.
\end{example}

\section{Lattice theoretic approach to Sun's conjecture}

According to Hong \cite{HongRfold}, in January 1997 (just before the Bouque-Ligh conjecture was solved) professor Qi Sun at Sichuan University had a private conversation with Hong and conjectured that if $S$ is a GCD closed set satisfying $\max_{x_i\in S}\{\omega(x_i)\}\leq 2$, then the LCM matrix $[S]$ is nonsingular ($\omega$ is the arithmetic function that counts the number of distinct prime divisors of the input element $x_i$). In 2005 the conjecture was proven right by Hong in the above mentioned article. This number theoretic proof is quite technical and requires the use of several auxiliary results. We are now going to apply our lattice theoretic method once again and give a new proof which is rather short, visually intuitive and easy to understand.

\begin{theorem}[Sun's conjecture]
Let $S=\{x_1,x_2,\ldots,x_n\}$ be a GCD closed set, where every element $x_i\in S$ has at most two distinct prime divisors. Then the LCM matrix $[S]$ is invertible. 
\end{theorem}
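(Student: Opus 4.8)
The plan is to apply Proposition~\ref{th:invertibility} and to compute each value $\Psi_S(x_i)$ directly from the two-dimensional order structure lying below $x_i$. Fix $x=x_i$ and write $x=p^aq^b$ with $p<q$ primes and $a,b\ge 0$ (one exponent may vanish). Every element of the GCD closed set $S_x:=\{y\in S : y\,|\,x\}$ divides $x$ and therefore has the form $p^cq^d$, so $S_x$ embeds into the grid $\{0,\dots,a\}\times\{0,\dots,b\}$ via $p^cq^d\mapsto(c,d)$; since $\gcd$ corresponds to the coordinatewise minimum, $S_x$ is a meet-subsemilattice of this grid with greatest element $x$. The set $C_S(x)=\{y_1,\dots,y_k\}$ of elements covered by $x$ is an antichain of the grid, so after reindexing $y_i=p^{c_i}q^{d_i}$ with $c_1<\cdots<c_k$ and $d_1>\cdots>d_k$; it forms a ``staircase'' (and $\omega(x)\le 1$ forces $k\le 1$, i.e.\ $S_x$ a chain).

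The heart of the argument is a closed form for $\mu_S(\cdot,x)$. Applying the crosscut theorem to each interval $[y,x]$ of $S_x$ — whose coatoms are exactly the $y_j$ lying above $y$, and in which the meet of a subfamily of the staircase depends only on its extreme indices — one obtains $\mu_S(x,x)=1$, $\mu_S(y_i,x)=-1$, $\mu_S(z_i,x)=+1$ for the ``inner corners'' $z_i:=\gcd(y_i,y_{i+1})=p^{c_i}q^{d_{i+1}}$ (for $1\le i\le k-1$, and $z_i\in S_x$ by GCD closedness), and $\mu_S(y,x)=0$ for every other $y\in S_x$. Substituting into \eqref{eq:psi} gives
\[
\Psi_S(x)=\frac1x-\sum_{i=1}^{k}\frac1{y_i}+\sum_{i=1}^{k-1}\frac1{z_i}.
\]

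It then remains to show this is nonzero. If $k=0$ it equals $1/x>0$; if $k=1$ it equals $1/x-1/y_1<0$. If $k\ge 2$ then $x$ has two distinct prime divisors, so $p\ge 2$ and $q\ge 3$, and for each $i$ one has $\mathrm{lcm}(y_i,y_{i+1})>y_i+y_{i+1}$: dividing by $z_i$ this reads $uv>u+v$ with $u=p^{\,c_{i+1}-c_i}\ge 2$ and $v=q^{\,d_i-d_{i+1}}\ge 3$, i.e.\ $(u-1)(v-1)\ge 2>1$; equivalently $1/z_i>1/y_i+1/y_{i+1}$. Summing over $i=1,\dots,k-1$ yields $\sum_i 1/z_i>\sum_{i=1}^{k}1/y_i$, whence $\Psi_S(x)>1/x>0$. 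In every case $\Psi_S(x)\ne 0$, and Proposition~\ref{th:invertibility} completes the proof.

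The step I expect to be the main obstacle is the Möbius computation: one must recognise that the grid picture turns $\gcd$ into a coordinatewise minimum and that, in the resulting crosscut sum over subsets of coatoms with a prescribed meet, only the staircase $y_1,\dots,y_k$ and its inner corners $z_1,\dots,z_{k-1}$ survive (the three cases according to whether the two relevant extreme indices coincide, are adjacent, or are farther apart). Note that Theorem~\ref{th:AB-joukko} already handles the cases $k\le 3$ but not $k\ge 4$, so this direct evaluation is genuinely needed; once it is in place the non-vanishing is the one-line inequality above, and the whole argument can be read off the staircase picture.
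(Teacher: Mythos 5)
Your proposal is correct and follows essentially the same route as the paper: reduce to $\Psi_S(x_i)\neq 0$ via Proposition~\ref{th:invertibility}, note that the elements covered by $x_i$ form a staircase $p^{a_1}q^{b_1},\ldots,p^{a_r}q^{b_r}$, establish the M\"obius values $1$, $-1$, $+1$, $0$ (top, coatoms, inner corners, everything else), and check that the resulting sum $\frac1{x_i}-\sum_j\frac1{y_j}+\sum_j\frac1{z_j}$ cannot vanish. The only deviations are minor and both valid: you justify the M\"obius values by a self-contained coatom/crosscut computation in each interval $[y,x_i]$ (using that gcd-closedness makes these intervals lattices and that meets of the staircase depend only on extreme indices), where the paper instead invokes Lemma 3.2 of \cite{MHM15} and reads the remaining values off a figure, and you obtain positivity for $k\ge 2$ from the corner-wise inequality $1/z_i>1/y_i+1/y_{i+1}$ summed over $i$, where the paper uses one three-term bracket plus telescoping two-term differences.
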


\begin{proof}
We only need to show that $\Psi_S(x_i)\neq 0$ for all $x_i\in S$. For the element $x_1$ we have $\Psi_S(x_1)=\frac{1}{x_1}>0$.
If $x_i$ covers exactly one element in $S$ (namely $x_j$), then we have
\[
\Psi_S(x_i)=\frac{1}{x_i}-\frac{1}{x_j}<0
\]
Suppose then that $x_i$ covers at least two elements in $S$. Because $\max_{x_i\in S}\{\omega(x_i)\}\leq 2$, all the divisors of $x_i$ are of the form $p^kq^l$, where $p,q\in \mathbb{P}$. Let $p^{a_1}q^{b_1}, p^{a_2}q^{b_2},\ldots,p^{a_r}q^{b_r}$ be the elements that are covered by $x_i$ in $S$. Without the loss of generality we may assume that $a_1<a_2<\cdots<a_r$. Because the elements $p^{a_k}q^{b_k}$ are incomparable, we must now have $b_1>b_2>\cdots>b_r$. Since $S$ is GCD closed, for any $k<l$ we have $\gcd(p^{a_k}q^{b_k},p^{a_l}q^{b_l})=p^{a_k}q^{b_l}\in S$. It should be noted that there may exist also other elements of type $p^aq^b$ in $S$ such that $p^aq^b\,|\,x_i$ and we have either $a_{k}<a<a_{k+1}$ for some $k\in\{1,\ldots,r-1\}$, $b_{l+1}<b<b_l$ for some $l\in\{1,\ldots,r-1\}$ or $a<a_1$ and $b<b_r$. However, in these possible cases we have $\mu_S(p^aq^b,x_i)=0$ by \cite[Lemma 3.2]{MHM15} and therefore these types of elements do not produce any nonzero terms to the function $\Psi_S(x_i)$ and also the existence of such elements does not affect on the calculation of the values $\mu_S(p^{a_k}q^{b_l},x_i)$, where $k,l\in\{1,\ldots,r\}$.

\begin{figure}
\centering
\begin{tikzpicture}[scale=0.9]
\draw (3,0)--(-2,5)--(3,6);
\draw (3,0)--(8,5)--(3,6);
\draw (2,1)--(6,5)--(3,6);
\draw (1,2)--(4,5)--(3,6);
\draw (4,1)--(0,5)--(3,6);
\draw (5,2)--(2,5)--(3,6);
\draw (6,3)--(4,5)--(3,6);
\draw (3,2)--(0,5)--(3,6);
\draw (0,3)--(2,5)--(3,6);
\draw (-1,4)--(0,5)--(3,6);
\draw (7,4)--(6,5);
\draw [fill] (3,0) circle [radius=0.1];
\draw [fill] (0,3) circle [radius=0.1];
\draw [fill] (3,4) circle [radius=0.1];
\draw [fill] (6,3) circle [radius=0.1];
\draw [fill] (2,1) circle [radius=0.1];
\draw [fill] (4,3) circle [radius=0.1];
\draw [fill] (1,2) circle [radius=0.1];
\draw [fill] (2,3) circle [radius=0.1];
\draw [fill] (4,1) circle [radius=0.1];
\draw [fill] (5,2) circle [radius=0.1];
\draw [fill] (3,2) circle [radius=0.1];
\draw [fill] (3,6) circle [radius=0.1];
\draw [fill] (-1,4) circle [radius=0.1];
\draw [fill] (0,5) circle [radius=0.1];
\draw [fill] (2,5) circle [radius=0.1];
\draw [fill] (3,4) circle [radius=0.1];
\draw [fill] (4,5) circle [radius=0.1];
\draw [fill] (5,4) circle [radius=0.1];
\draw [fill] (-2,5) circle [radius=0.1];
\draw [fill] (1,4) circle [radius=0.1];
\draw [fill] (7,4) circle [radius=0.1];
\draw [fill] (8,5) circle [radius=0.1];
\draw [fill] (6,5) circle [radius=0.1];
\node [above right] at (3,6) {$x_i$};
\node [right] at (-2,5) {$p^{a_1}q^{b_1}$};
\node [right] at (0,5) {$p^{a_2}q^{b_2}$};
\node [right] at (2,5) {$p^{a_3}q^{b_3}$};
\node [right] at (8,5) {$p^{a_r}q^{b_r}$};
\node [right] at (6,5) {$p^{a_{r-1}}q^{b_{r-1}}$};
\node [right] at (-1,4) {$p^{a_1}q^{b_2}$};
\node [right] at (1,4) {$p^{a_2}q^{b_3}$};
\node [right] at (3,4) {$p^{a_3}q^{b_4}$};
\node [right] at (7,4) {$p^{a_{r-1}}q^{b_r}$};
\node [right] at (0,3) {$p^{a_1}q^{b_3}$};
\node [right] at (2,3) {$p^{a_2}q^{b_4}$};
\node [right] at (1,2) {$p^{a_1}q^{b_4}$};
\end{tikzpicture}
\caption{Illustration of the structure of the set $\mathrm{meetcl}(C_S(x_i))\cup\{x_i\}$ that we use in the proof of Sun's conjecture.}\label{fig:Sun}
\end{figure}
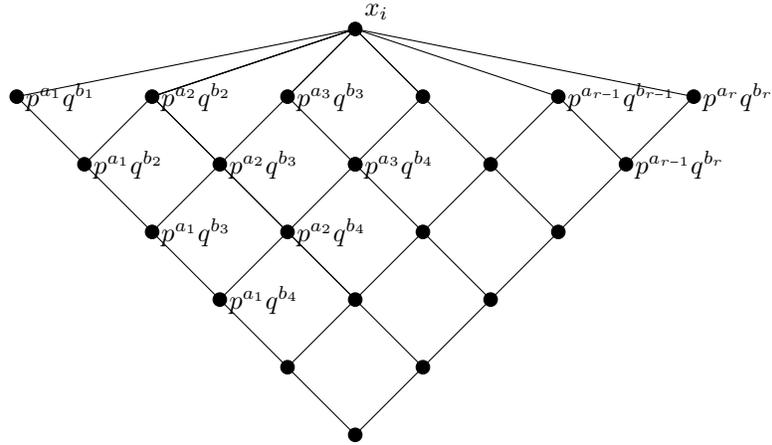

\begin{figure}
\centering
\begin{tikzpicture}[scale=0.9]
\draw (3,0)--(-2,5)--(3,6);
\draw (3,0)--(8,5)--(3,6);
\draw (2,1)--(6,5)--(3,6);
\draw (1,2)--(4,5)--(3,6);
\draw (4,1)--(0,5)--(3,6);
\draw (5,2)--(2,5)--(3,6);
\draw (6,3)--(4,5)--(3,6);
\draw (3,2)--(0,5)--(3,6);
\draw (0,3)--(2,5)--(3,6);
\draw (-1,4)--(0,5)--(3,6);
\draw (7,4)--(6,5);
\draw [fill] (3,0) circle [radius=0.1];
\draw [fill] (0,3) circle [radius=0.1];
\draw [fill] (3,4) circle [radius=0.1];
\draw [fill] (6,3) circle [radius=0.1];
\draw [fill] (2,1) circle [radius=0.1];
\draw [fill] (4,3) circle [radius=0.1];
\draw [fill] (1,2) circle [radius=0.1];
\draw [fill] (2,3) circle [radius=0.1];
\draw [fill] (4,1) circle [radius=0.1];
\draw [fill] (5,2) circle [radius=0.1];
\draw [fill] (3,2) circle [radius=0.1];
\draw [fill] (3,6) circle [radius=0.1];
\draw [fill] (-1,4) circle [radius=0.1];
\draw [fill] (0,5) circle [radius=0.1];
\draw [fill] (2,5) circle [radius=0.1];
\draw [fill] (3,4) circle [radius=0.1];
\draw [fill] (4,5) circle [radius=0.1];
\draw [fill] (5,4) circle [radius=0.1];
\draw [fill] (-2,5) circle [radius=0.1];
\draw [fill] (1,4) circle [radius=0.1];
\draw [fill] (7,4) circle [radius=0.1];
\draw [fill] (8,5) circle [radius=0.1];
\draw [fill] (6,5) circle [radius=0.1];
\node [above right] at (3,6) {$1$};
\node [right] at (-2,5) {$-1$};
\node [right] at (0,5) {$-1$};
\node [right] at (2,5) {$-1$};
\node [right] at (4,5) {$-1$};
\node [right] at (8,5) {$-1$};
\node [right] at (6,5) {$-1$};
\node [right] at (-1,4) {$1$};
\node [right] at (1,4) {$1$};
\node [right] at (3,4) {$1$};
\node [right] at (5,4) {$1$};
\node [right] at (7,4) {$1$};
\node [right] at (0,3) {$0$};
\node [right] at (2,3) {$0$};
\node [right] at (4,3) {$0$};
\node [right] at (6,3) {$0$};
\node [right] at (1,2) {$0$};
\node [right] at (3,2) {$0$};
\node [right] at (5,2) {$0$};
\node [right] at (2,1) {$0$};
\node [right] at (4,1) {$0$};
\node [right] at (3,0) {$0$};
\end{tikzpicture}
\caption{A figure that gives the Möbius function values $\mu_S(p^{a_k}q^{b_k},x_i)$ for different elements $p^{a_k}q^{b_k}\in\mathrm{meetcl}(C_S(x_i))\cup\{x_i\}$.}\label{fig:SunMöbius}
\end{figure}
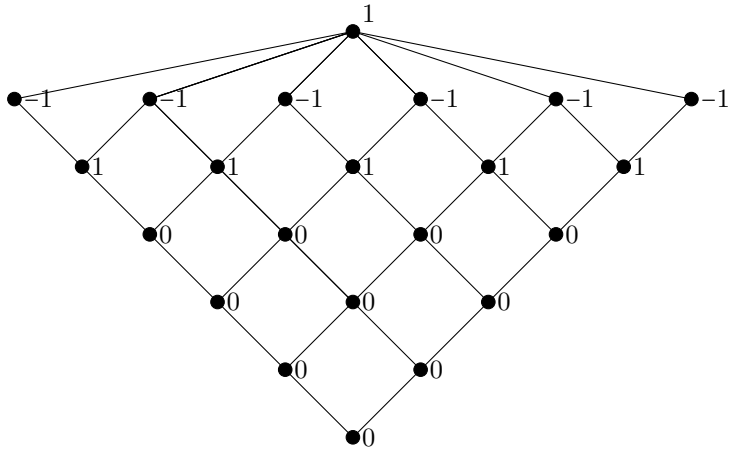

The structure of $\mathrm{meetcl}(C_S(x_i))\cup\{x_i\}$ is illustrated in Figure \ref{fig:Sun}. We also need to calculate the values $\mu_S(p^{a_k}q^{b_k},x_i)$ of the Möbius function on the set $S$, which is done in Figure \ref{fig:SunMöbius}. We now obtain
\begin{align*}
&\Psi_S(x_i)=\sum_{x_k\in S} \frac{\mu_S(x_k,x_i)}{x_k}=\frac{1}{x_i}-\sum_{j=1}^r \frac{1}{p^{a_j}q^{b_j}}+\sum_{j=1}^{r-1} \frac{1}{p^{a_j}q^{b_{j+1}}}\\
&=\frac{1}{x_i}+\left(\frac{1}{p^{a_1}q^{b_2}}-\frac{1}{p^{a_1}q^{b_1}}-\frac{1}{p^{a_2}q^{b_2}}\right)+\sum_{j=3}^{r-1} \left(\frac{1}{p^{a_{j-1}}q^{b_{j}}}-\frac{1}{p^{a_j}q^{b_{j}}}\right)\\
&=\frac{1}{x_i}+\frac{1}{p^{a_2}q^{b_1}}\underbrace{\left(p^{a_2-a_1}q^{b_1-b_2}-p^{a_2-a_1}-q^{b_1-b_2}\right)}_{=p^{a_2-a_1}(q^{b_1-b_2}-1)-q^{b_1-b_2}>0}+\sum_{j=3}^{r-1} \frac{1}{p^{a_j}q^{b_{j}}}\underbrace{\left(p^{a_j-a_{j-1}}-1\right)}_{>0}>0.
\end{align*}
Thus our proof is complete.
\end{proof}

\section{Extending the lattice theoretic method in order to study power LCM matrices}

The LCM matrix of the set $S$ with respect to the arithmetical function $N^\alpha$, where $N^\alpha(n)=n^\alpha$ for all $n\in\mathbb{Z}^+$, is the $n\times n$ matrix $[S]_{N^\alpha}$ with $(\mathrm{lcm}(x_i,x_j))^\alpha$ as its $ij$ entry. These type of matrices are known as power LCM matrices. If we wish to study the invertibility of the matrix $[S]_{N^\alpha}$ under the assumption that the set $S$ is GCD closed, we may use a similar approach as we did with the usual LCM matrices. By making use of the identity $(\gcd(x_i,x_j))^\alpha(\text{lcm}(x_i,x_j))^\alpha=x_i^\alpha x_j^\alpha$ we may decompose the matrix $[S]_{N^\alpha}$ as
\[
[S]_{N^{\alpha}}=\text{diag}(x_1^\alpha,x_2^\alpha,\ldots,x_n^\alpha)\left(\frac{1}{(\gcd(x_i,x_j))^\alpha}\right)\text{diag}(x_1^\alpha,x_2^\alpha,\ldots,x_n^\alpha).
\]
Next we use Möbius inversion in exactly the same way and define the function $\Psi_{S,\frac{1}{N^\alpha}}$ on $S$ as
\begin{equation}\label{eq:psialpha}
\Psi_{S,\frac{1}{N^\alpha}}(x_i)=\sum_{x_j\,|\,x_i}\frac{\mu_S(x_j,x_i)}{x_j^\alpha}.
\end{equation}
It should be noted that for the special case $\alpha=1$ we have $\Psi_{S,\frac{1}{N^\alpha}}=\Psi_{S}$.

Now the so-called reciprocal power GCD matrix $\left(\frac{1}{(\gcd(x_i,x_j))^\alpha}\right)$ may be written as
\[
\left(\frac{1}{(\gcd(x_i,x_j))^\alpha}\right)=E\,\text{diag}(\Psi_{S,\frac{1}{N^\alpha}}(x_1),\Psi_{S,\frac{1}{N^\alpha}}(x_2),\ldots,\Psi_{S,\frac{1}{N^\alpha}}(x_n))\,E^T,
\]
where $E=(e_{ij})$ is the $0,1$ incidence matrix defined in Section 2. Thereby we obtain the factorization
\begin{equation*}
[S]_{N^\alpha}=\Delta^\alpha E\Lambda E^T\Delta^\alpha=(\Delta^\alpha E)\Lambda(\Delta^\alpha E)^T,
\end{equation*}
where $\Delta^\alpha=\text{diag}(x_1^\alpha,x_2^\alpha,\ldots,x_n^\alpha)$ and $$\Lambda=\text{diag}(\Psi_{S,\frac{1}{N^\alpha}}(x_1),\Psi_{S,\frac{1}{N^\alpha}}(x_2),\ldots\Psi_{S,\frac{1}{N^\alpha}}(x_n)),$$ which leads us to the analogous conclusion that the power LCM matrix $[S]_{N^\alpha}$ is invertible if and only if $\Psi_{S,\frac{1}{N^\alpha}}(x_i)\neq0$ for all $i=1,2,\ldots,n.$ 

In 2002 Hong \cite{Hong02} raised the following conjecture concerning power LCM matrices.

\begin{conjecture}[cf. \cite{Hong02}, Conjecture]
If $t$ is a given positive integer, then there exists a positive integer $k(t)$ (depending only on $t$) such that the power LCM matrix $[(\mathrm{lcm}(x_i,x_j))^t]$ defined on any GCD closed set $S=\{x_1,x_2,\ldots,x_n\}$ is nonsingular for all $n\leq k(t)$. But for any $n\geq k(t)+1$ there exists a GCD closed set $S=\{x_1,x_2,\ldots,x_n\}$ such that the matrix $[(\mathrm{lcm}(x_i,x_j))^t]$ is singular.
\end{conjecture}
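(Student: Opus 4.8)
The plan is to separate the conjecture into two easy monotonicity statements and one hard existence statement, and to phrase everything through the twisted function $\Psi_{S,\frac{1}{N^{t}}}$ of this section. The factorization $[S]_{N^{t}}=(\Delta^{t}E)\Lambda(\Delta^{t}E)^{T}$ already tells us that $[S]_{N^{t}}$ is nonsingular if and only if $\Psi_{S,\frac{1}{N^{t}}}(x_i)\neq 0$ for every $x_i\in S$. Call $n$ \emph{$t$-good} if every GCD closed set of cardinality $n$ has a nonsingular power-$t$ LCM matrix. Note that $1$ is always $t$-good, since $\Psi_{S,\frac{1}{N^{t}}}(x_1)=x_1^{-t}>0$. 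I claim the conjecture follows once we know: (i) if $n$ is $t$-good then so is $n-1$; (ii) if there is a GCD closed set of size $m$ with singular power-$t$ LCM matrix, then there is one of every size $n\ge m$; and (iii) for each $t$ at least one singular power-$t$ example exists. Granting these, the $t$-good integers form a finite initial segment $\{1,\dots,k(t)\}$ (finite by (iii) and (ii), an initial segment by (i)), and for $n\le k(t)$ all matrices are nonsingular while for $n\ge k(t)+1$ a singular example exists --- directly at $n=k(t)+1$ by definition of $k(t)$, and for larger $n$ by (ii).

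Step (i) is a localisation argument. Given a GCD closed set $T$ of size $n-1$, put $M=\mathrm{lcm}(T)$, choose a prime $p$ dividing no element of $T$, and set $S=T\cup\{z\}$ with $z=pM$; then $S$ is GCD closed of size $n$ and $z$ is its maximum. Since $z\nmid x_i$ for every $x_i\in T$, the divisor set and the intervals below any $x_i\in T$ are the same in $(S,|)$ as in $(T,|)$, whence $\mu_S(x_j,x_i)=\mu_T(x_j,x_i)$ and $\Psi_{S,\frac{1}{N^{t}}}(x_i)=\Psi_{T,\frac{1}{N^{t}}}(x_i)$ for all $x_i\in T$. A vanishing $\Psi$-value for $T$ would thus produce one for $S$, contradicting $t$-goodness of $n$. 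Step (ii) just reuses the constructions of Section 3: from a singular example $S$ one may adjoin a coprime multiple $a x_1$ of the minimum (as in cases $9_A$--$9_D$), which leaves the interval below the old top $x_m$ untouched so that $\Psi_{S\cup\{ax_1\},\frac{1}{N^{t}}}(x_m)=\Psi_{S,\frac{1}{N^{t}}}(x_m)=0$; or one may use Hong's new-minimum construction $x\mapsto ax$ together with a fresh bottom $a$ (case $9_E$), under which the relevant top value is merely multiplied by $a^{-t}$ and so stays $0$. Iterating either move realises every size $n\ge m$.

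This leaves Step (iii), which is the real obstacle and the only place where genuine number theory enters. For $t=1$ it is the classical Bourque--Ligh phenomenon, with $k(1)=7$. For $t\ge 2$ the lattice-theoretic line of attack is to fix a meet semilattice in which the maximal element $x_n$ does \emph{not} generate a double-chain set (so that neither Theorem \ref{th:AB-joukko} nor the power analogue of its proof forces $\Psi\neq 0$), arrange by a generalisation of \cite[Lemma 3.2]{MHM15} that all interior M\"obius coefficients below $x_n$ vanish, and then choose the integer realization so that
\[
\Psi_{S,\frac{1}{N^{t}}}(x_n)=\sum_{\substack{y\preceq x_n\\ \mu_S(y,x_n)\neq 0}}\frac{\mu_S(y,x_n)}{y^{t}}=0 ,
\]
a signed sum of reciprocals of $t$-th powers over comparatively few elements $y$ (in the cube-like cases, the coatoms with sign $-1$ and the adjacent second-layer meets with sign $+1$). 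Because $t$-th powers spread out as $t$ grows, more coatoms are needed to balance this sum, which is exactly why $k(t)$ should increase with $t$. The hard part is proving that such a balanced configuration can always be realised inside a genuine GCD closed set: one natural idea is to ``inflate'' a solution for exponent $t$ into one for $t+1$ by perturbing prime exponents, another is a counting/pigeonhole argument over the many integer realizations of a sufficiently wide fixed semilattice, but making either precise --- and in particular settling whether the set of $t$-good integers is finite for every $t$ --- is where the difficulty concentrates.
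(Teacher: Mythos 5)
The statement you are attempting is not proved in the paper at all: it appears there verbatim as Hong's conjecture (cf.\ \cite{Hong02}), and the authors explicitly remark that the existence of $k(t)$ ``appears to be a highly nontrivial problem''. The only thing the paper proves in this direction is Theorem \ref{powerthm}, a weaker relative: for every real $M\geq 1$ there exist a GCD closed set $S$ and a \emph{real} exponent $\alpha_0>M$ such that $[S]_{N^{\alpha_0}}$ is singular. The exponent there is produced by Bolzano's intermediate value theorem and there is no control forcing it to be an integer, so that theorem does not supply the singular examples that the conjecture demands for a fixed integer exponent $t$.

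Measured against that, your proposal is a correct \emph{reduction}, not a proof. Steps (i) and (ii) are fine: adjoining a new top $pM$ above a bad set of size $n-1$ leaves all lower intervals, hence all values $\Psi_{S,\frac{1}{N^t}}(x_i)$ for $x_i\in T$, unchanged; and both propagation moves (a coprime multiple of $x_1$ as a new maximal element, or Hong's rescaling $x\mapsto ax$ with a new bottom $a$, which multiplies the relevant value by $a^{-t}$) preserve a vanishing $\Psi$-value, so the $t$-good cardinalities indeed form a downward closed set and the conjecture is equivalent to your step (iii): for every integer $t$ there exists \emph{some} GCD closed set of positive integers whose power-$t$ LCM matrix is singular. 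But step (iii) is exactly the open core of Hong's conjecture. For $t\geq 2$ no such example is known (the paper only records $k(t)\geq 8$ for all $t\geq 2$, via \cite{Cao07}), and your sketch --- vanishing interior M\"obius coefficients plus a balanced signed sum of reciprocal $t$-th powers over the coatoms of a cube-like semilattice, realised by ``inflation'' or a pigeonhole count --- is a plan with no argument that such an integer configuration exists for even one $t\geq 2$, let alone all $t$; you say as much yourself. So the gap is not a fixable technical step but the entire content of the statement: both your write-up and the paper leave the conjecture open, and your contribution should be presented as a reduction of it to the single-existence question, not as a proof.
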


Known results for usual LCM matrices imply that $k(t)\geq 8$ for all $t\geq 2$, see \cite{Cao07}. The existence of $k(t)$ appears to be a highly nontrivial problem. However, we can use the cube semilattice to prove the following result that is closely related to the above conjecture.

\begin{theorem}\label{powerthm}
If $M\geq 1$ is an arbitrary real number, then there exists a GCD closed set $S=\{x_1,x_2,\ldots,x_n\}$ and real number $\alpha_0$ such that $\alpha_0> M$ and the power LCM matrix $[(\mathrm{lcm}(x_i,x_j))^{\alpha_0}]$ is singular.
\end{theorem}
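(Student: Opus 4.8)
The plan is to produce, for a prescribed $M\ge 1$, an explicit GCD closed set $S$ with maximum element $\hat 1$ such that $\alpha\mapsto\Psi_{S,\frac{1}{N^{\alpha}}}(\hat 1)$ is strictly negative at some point past $M$ but tends to a positive constant as $\alpha\to\infty$; the intermediate value theorem then yields a real $\alpha_0>M$ with $\Psi_{S,\frac{1}{N^{\alpha_0}}}(\hat 1)=0$, and the power-matrix version of Proposition~\ref{th:invertibility} recorded just above shows that $[S]_{N^{\alpha_0}}$ is singular.

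A single $8$-element cube is insufficient here: if $S$ is a cube realized with $x_1=1$ then $\Psi_{S,\frac{1}{N^{\alpha}}}(x_8)=x_8^{-\alpha}-x_5^{-\alpha}-x_6^{-\alpha}-x_7^{-\alpha}+x_2^{-\alpha}+x_3^{-\alpha}+x_4^{-\alpha}-1$, and because the atoms are pairwise coprime (so at least $2,3,5$) the positive part never reaches $1$ once $\alpha>\log_2 3$; thus cubes only give singular power LCM matrices for $\alpha$ in a bounded set. I would therefore generalize the cube. For $m\ge 3$ let $Q_m$ be the meet semilattice with a bottom $\hat 0$, atoms $a_1,\dots,a_m$, elements $c_{ij}$ ($1\le i<j\le m$) where $c_{ij}$ covers exactly $a_i$ and $a_j$, and a top $\hat 1$ covering all the $c_{ij}$; thus $Q_3$ is the cube. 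A short M\"obius recursion gives $\mu(c_{ij},\hat1)=-1$, $\mu(a_i,\hat1)=m-2$ and, decisively, $\mu(\hat0,\hat1)=-\binom{m-1}{2}$, which is unbounded in $m$. Realize $Q_m$ by divisors with $\hat0=3$, $a_i=3p_i$, $c_{ij}=3p_ip_j$ and $\hat1=2\cdot3\cdot p_1\cdots p_m$ for distinct primes $3<p_1<\dots<p_m$, and adjoin the global minimum $1$ together with the element $2$ (comparable only to $1$ and to $\hat 1$):
\[
S=\{1,2,3\}\cup\{3p_i : 1\le i\le m\}\cup\{3p_ip_j : 1\le i<j\le m\}\cup\{\hat 1\}.
\]
Then $S$ is GCD closed, the interval $[3,\hat 1]$ in $(S,|)$ is isomorphic to $Q_m$, and the presence of $2$ forces $\mu_S(2,\hat1)=-1$ and hence $\mu_S(1,\hat1)=+1$ — the element $2$ is exactly the device that lifts the coefficient of $1$ from $0$ to $1$.

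Assembling the M\"obius values gives
\[
\Psi_{S,\frac{1}{N^{\alpha}}}(\hat 1)=1-2^{-\alpha}-\binom{m-1}{2}3^{-\alpha}+(m-2)\sum_{i=1}^{m}(3p_i)^{-\alpha}-\sum_{i<j}(3p_ip_j)^{-\alpha}+\hat 1^{-\alpha},
\]
whence $\lim_{\alpha\to\infty}\Psi_{S,\frac{1}{N^{\alpha}}}(\hat 1)=1>0$ for free. For the negative value I would first choose $m$ with $\binom{m-1}{2}\ge 2\cdot 3^{M}$, so that $1-2^{-M}-\binom{m-1}{2}3^{-M}\le -1$; then, with $m$ fixed, choose the primes $p_1<\dots<p_m$ large enough that the two nonnegative tail terms satisfy $(m-2)\sum_{i}(3p_i)^{-M}+\hat 1^{-M}<\tfrac12$ (possible since $m$ is now fixed). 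This makes $\Psi_{S,\frac{1}{N^{M}}}(\hat 1)<-\tfrac12<0$, and continuity together with the limit $1$ forces a zero $\alpha_0\in(M,\infty)$.

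The routine parts are checking GCD closedness of $S$, verifying that $\hat 1$ covers each $c_{ij}$ and that each $a_i$ is covered exactly by the $c_{ij}$ with $j\neq i$ (so that $[3,\hat1]\cong Q_m$ really holds), and running the M\"obius recursion on $Q_m$. The only genuine obstacle is conceptual: one must abandon the cube in favour of a family of \emph{divisor} meet semilattices whose interval $[\hat 0,\hat 1]$ has $|\mu(\hat 0,\hat 1)|$ arbitrarily large while the top of the semilattice still has $\Psi$ tending to a fixed positive value as $\alpha\to\infty$; the two-element tail $\{1,2\}$ is precisely what reconciles these requirements.
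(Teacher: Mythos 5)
Your proposal is correct, and it follows the same overall skeleton as the paper's proof of Theorem~\ref{powerthm}: build a parameterized GCD closed set, track only $\Psi_{S,\frac{1}{N^\alpha}}$ at the maximum element, show it takes opposite signs at $\alpha=M$ and as $\alpha\to\infty$, and invoke continuity plus the intermediate value theorem together with the power-matrix analogue of Proposition~\ref{th:invertibility}. The witness family, however, is genuinely different. The paper keeps the $8$-element cube on $\{1,2,3,5,6,10,15\}$ with top $30p_1\cdots p_{k-1}$ and attaches $k-1$ pendant coatoms $2p_i$ above the element $2$; this inflates $\mu_S(2,x_n)$ to $+k$, so that $h(\alpha)=\Psi_{S,\frac{1}{N^\alpha}}(x_n)$ is \emph{positive} on all of $[1,\log_2 k]\ni M$ (the dominant term is $k/2^\alpha$), while $\mu_S(1,x_n)=-1$ forces $h(\alpha)\to-1$. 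You instead take the rank-two truncation $Q_m$ (bottom, $m$ atoms, all $\binom{m}{2}$ joins of pairs, top), realize it on multiples of $3$, and adjoin the tail $\{1,2\}$; here the inflated coefficient is $\mu_S(3,\hat1)=-\binom{m-1}{2}$, giving $\Psi<0$ at $\alpha=M$ after choosing $m$ and then the primes, while the element $2$ flips $\mu_S(1,\hat1)$ to $+1$ so that the limit at infinity is $+1$. Your M\"obius values ($\mu(c_{ij},\hat1)=-1$, $\mu(a_i,\hat1)=m-2$, $\mu(\hat0,\hat1)=-\binom{m-1}{2}$, $\mu_S(2,\hat1)=-1$, $\mu_S(1,\hat1)=+1$) check out, as do GCD closedness and the covering relations, so the sign estimates and the IVT step go through; in effect you have swapped which end of the $\alpha$-axis carries which sign, at the cost of a slightly more elaborate lattice, whereas the paper's choice keeps the lattice closer to the familiar cube and gets the large coefficient from the smallest possible base $2$, which is what makes the explicit threshold $\log_2 k\geq M$ so clean. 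Both arguments are complete proofs of the statement; yours additionally isolates the general principle that one needs a family of semilattices in which a single M\"obius coefficient can be made arbitrarily large while the coefficient of $1$ stays of the opposite sign.
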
 

\begin{proof}
Let $k\geq 2$ be a positive integer such that $\displaystyle \frac{\ln k}{\ln 2}\geq M$. Let $p_1,p_2,\ldots,p_{k-1}$ be sufficiently large prime numbers (to be specified later). Let us consider the set
\[
S=\{1,2,3,5,6,10,15,2p_1,2p_2,\ldots, 2p_{k-1},30p_1p_2\cdots p_{k-1}\}
\]
with $n=k+7$ elements. We obtain the following semilattice structure shown in Figure \ref{fig:laajennettukuutio}. The corresponding Möbius function values $\mu_S(x_i,x_n)$ are also shown in the same figure.

\begin{figure}[h!]
\centering
\subfigure
{{\scalefont{0.8}
\begin{tikzpicture}[scale=1]
\draw (1,0)--(-0.5,1)--(-0.5,2)--(0.92,2.92);
\draw (1,0)--(1,1)--(-0.5,2);
\draw (-0.5,1)--(1,2)--(1,2.9);
\draw (1,0)--(2.5,1)--(2.5,2)--(1.08,2.92);
\draw (1,1)--(2.5,2);
\draw (-0.5,1)--(-2,2)--(1,3);
\draw (-0.5,1)--(-3.5,2)--(1,3);
\draw (-0.5,1)--(-6,2)--(1,3);
\draw (2.5,1)--(1,2);
\draw [fill] (-3.5,2) circle [radius=0.1];
\draw [fill] (-6,2) circle [radius=0.1];
\draw [fill] (-0.5,1) circle [radius=0.1];
\draw [fill] (1,1) circle [radius=0.1];
\draw [fill] (2.5,1) circle [radius=0.1];
\draw [fill] (1,0) circle [radius=0.1];
\draw [fill] (-0.5,2) circle [radius=0.1];
\draw [fill] (1,2) circle [radius=0.1];
\draw [fill] (2.5,2) circle [radius=0.1];
\draw [fill] (-2,2) circle [radius=0.1];
\draw [fill] (1,3) circle [radius=0.1];
\node [right] at (1,0) {$1$};
\node [right] at (-0.5,1) {$2$};
\node [right] at (-0.5,2) {$6$};
\node [right] at (1,3) {$30p_1p_2\cdots p_{k-1}=x_n$};
\node [right] at (1,1) {$3$};
\node [right] at (1,2) {$10$};
\node [right] at (2.5,1) {$5$};
\node [right] at (2.5,2) {$15$};
\node [left] at (-2,2) {$2p_1$};
\node [left] at (-3.5,2) {$2p_2$};
\node [left] at (-4.2,2) {$\cdots$};
\node [left] at (-6,2) {$2p_{k-1}$};
\end{tikzpicture}}
}
\subfigure
{{\scalefont{0.8}
\begin{tikzpicture}[scale=1]
\draw (1,0)--(-0.5,1)--(-0.5,2)--(0.92,2.92);
\draw (1,0)--(1,1)--(-0.5,2);
\draw (-0.5,1)--(1,2)--(1,2.9);
\draw (1,0)--(2.5,1)--(2.5,2)--(1.08,2.92);
\draw (1,1)--(2.5,2);
\draw (-0.5,1)--(-2,2)--(1,3);
\draw (-0.5,1)--(-3.5,2)--(1,3);
\draw (-0.5,1)--(-6,2)--(1,3);
\draw (2.5,1)--(1,2);
\draw [fill] (-3.5,2) circle [radius=0.1];
\draw [fill] (-6,2) circle [radius=0.1];
\draw [fill] (-0.5,1) circle [radius=0.1];
\draw [fill] (1,1) circle [radius=0.1];
\draw [fill] (2.5,1) circle [radius=0.1];
\draw [fill] (1,0) circle [radius=0.1];
\draw [fill] (-0.5,2) circle [radius=0.1];
\draw [fill] (1,2) circle [radius=0.1];
\draw [fill] (2.5,2) circle [radius=0.1];
\draw [fill] (-2,2) circle [radius=0.1];
\draw [fill] (1,3) circle [radius=0.1];
\node [right] at (1,0) {$-1$};
\node [right] at (-0.5,1) {$k$};
\node [right] at (-0.5,2) {$-1$};
\node [right] at (1,3) {$1$};
\node [right] at (1,1) {$1$};
\node [right] at (1,2) {$-1$};
\node [right] at (2.5,1) {$1$};
\node [right] at (2.5,2) {$-1$};
\node [left] at (-2,2) {$-1$};
\node [left] at (-3.5,2) {$-1$};
\node [left] at (-4.2,2) {$\cdots$};
\node [left] at (-6,2) {$-1$};
\end{tikzpicture}}
}
\caption{Illustration of the semilattice structure used in the proof of Theorem \ref{powerthm} and the Möbius function values $\mu_S(x_i,x_n)$ that are also needed in the calculation of the function $\Psi_{S,\frac{1}{N^\alpha}}(x_n)$.}\label{fig:laajennettukuutio}
\end{figure}
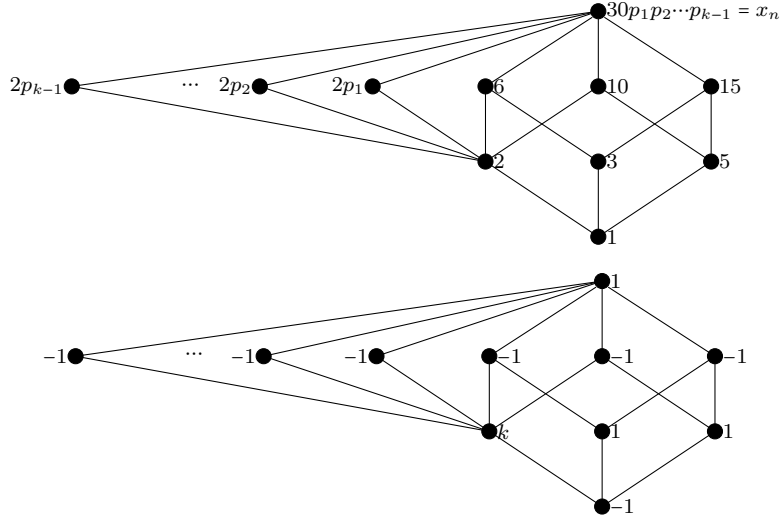

Now we write
\begin{align*}
h(\alpha)&:=\Psi_{S,\frac{1}{N^\alpha}}(x_n)=\sum_{i=1}^n\frac{\mu_S(x_i,x_n)}{x_i^\alpha}\\
&=\frac{-1}{1^\alpha}+\frac{k}{2^\alpha}+\frac{1}{3^\alpha}+\frac{1}{5^\alpha}+\frac{-1}{6^\alpha}+\frac{-1}{10^\alpha}+\frac{-1}{15^\alpha}+\sum_{i=1}^{k-1}\frac{-1}{(2p_i)^\alpha}+\frac{1}{x_n^\alpha}\\
&=\left(\underbrace{-1+\frac{k}{2^\alpha}}_{\geq0\ \text{when}\ \alpha\leq \ln k/\ln 2}\right)+\frac{1}{30^\alpha}(\underbrace{10^\alpha-5^\alpha-3^\alpha-2^\alpha}_{\geq0\ \text{when}\ \alpha\geq 1})\\
&\qquad\qquad+\left(\underbrace{\frac{1}{5^\alpha}-\sum_{i=1}^{k-1}\frac{1}{(2p_i)^\alpha}}_{\geq 0\ \text{for suff. large}\ p_i}\right)+\frac{1}{x_n^\alpha}.
\end{align*}
We can see that $h(\alpha)> 0$ for all $\displaystyle 1\leq \alpha\leq \frac{\ln k}{\ln 2}$. In particular, since $1\leq M\leq \frac{\ln k}{\ln 2}$, we have $h(M)>0$.

On the other hand $h(\alpha)$ is a continuous function of $\alpha$ and we have
\[
\lim_{\alpha\to \infty}h(\alpha)=-1+\lim_{\alpha\to \infty}\sum_{i=2}^n\underbrace{\frac{\mu_S(x_i,x_n)}{x_i^\alpha}}_{\to 0}=-1.
\]
It now follows from Bolzano's theorem that $h(\alpha_0)=0$ for some $\alpha_0>M$.
\end{proof}


\begin{thebibliography}{11}
\bibitem{AA17}
Alt{\i}n{\i}\c{s}{\i}k, E. and T. Alt{\i}nta\c{s} (2017). A note on the singularity of LCM matrices on GCD-closed sets with 9 elements. \textit{Journal of Science and Arts 3(40)}, 413--422.

\bibitem{Bour92} Bourque, K. and S.~Ligh (1992). On GCD and LCM matrices, \textit{Linear Algebra Appl. 174}, 65--74.

\bibitem{Cao07} Cao, W. (2007). On Hong's conjecture for power LCM matrices, \textit{Czechoslov. Math. J. 57}, 253--268.

\bibitem{MHM15} Haukkanen, P., M. Mattila and J. Mäntysalo (2015). Studying the singularity of LCM-type matrices via semilattice structures and their M\"obius functions, \textit{J. Comb. Theory Series A 135}, 181--200.

\bibitem{MHM20} Haukkanen, P., M. Mattila and J. Mäntysalo (2020). Studying the inertias of LCM matrices and revisiting the Bourque-Ligh conjecture, \textit{J. Comb. Theory Series A 171}, 105161.

\bibitem{HauWanSil} Haukkanen, P., J.~Wang and J.~Sillanp\"a\"a (1997). On Smith's determinant, \textit{Linear Algebra Appl. 258}, 251--269.

\bibitem{Hong99} Hong, S. (1999). On the Bourque-Ligh conjecture of least common multiple matrices, \textit{J. Algebra 218}, 216--228.

\bibitem{Hong02} Hong, S. (2002). Gcd-closed sets and determinants of matrices associated with arithmetical functions, \textit{Acta Arith. 101}, 321--332.

\bibitem{HongRfold} Hong, S. (2005). Nonsingularity of least common multiple matrices on gcd-closed sets, \textit{J.~Number Theory 113}, 1--9.

\bibitem{Hong06} Hong, S. (2006). Nonsingularity of matrices associated with classes of arithmetical functions on lcm-closed sets, \textit{Linear Algebra Appl. 416 (1)}, 124--134.

\bibitem{KMH18} Korkee, I., M. Mattila  and P. Haukkanen (2019). A lattice-theoretic approach to the Bourque-Ligh conjecture. \textit{Linear Multilinear Algebra 67 (no. 12)}, 2471--2487. 

\bibitem{Smi} Smith, H. J. S. (1876). On the value of a certain arithmetical determinant, \textit{Proc.\ London Math.\ Soc. 7}, 208--212.

\end{thebibliography}
\end{document}